\documentclass[12pt]{amsart}
\usepackage[utf8]{inputenc}
\usepackage[margin=1in]{geometry}

\RequirePackage{color}
\usepackage{xcolor}

\makeatletter


\usepackage{amsmath,amsfonts,amssymb,amsthm,epsfig,epstopdf,titling,url,array,mathtools,faktor,paralist, xfrac, mathrsfs, enumitem}
\usepackage[new]{old-arrows}
\usepackage[foot]{amsaddr}

\theoremstyle{plain}
\newtheorem{thm}{Theorem}[subsection]
\newtheorem*{thmm}{Theorem}
\newtheorem{thmx}{Theorem}

\newtheorem{lem}[thm]{Lemma}
\newtheorem{prop}[thm]{Proposition}
\newtheorem{cor}[thm]{Corollary}

\theoremstyle{definition}
\newtheorem{defn}{Definition}[subsection]

\newtheorem{rmk}[thm]{Remark}

\DeclareMathOperator{\SI}{SI}
\DeclareMathOperator{\GL}{GL}
\DeclareMathOperator{\SL}{SL}
\DeclareMathOperator{\rep}{{\rm rep}}
\DeclareMathOperator{\K}{\mathbb{K}}
\DeclareMathOperator{\img}{Im}
\DeclareMathOperator{\Hom}{Hom}
\DeclareMathOperator{\Ext}{Ext}
\DeclareMathOperator{\remainder}{rem}
\newcommand{\rem}[1]{{\remainder(#1)}}

\begingroup\lccode`~=`& \lowercase{\endgroup
\newcommand{\spmat}[1]{%
  \left[
  \let~=&
  \begin{matrix}#1\end{matrix}
  \right]
}}

\definecolor{bettergreen}{RGB}{0,150,0}
\definecolor{alizarin}{rgb}{0.85, 0.15, 0.26}
\definecolor{azure}{rgb}{0.0, 0.5, 1.0}
\definecolor{upmaroon}{rgb}{0.80, 0.07, 0.07}

\usepackage{tikz}
\usepackage{tikz-cd}
\usetikzlibrary{arrows,positioning,scopes,trees}

 \usepackage{hyperref}
 \hypersetup
 {
     colorlinks=true,
     linktocpage=true,
     linkcolor=upmaroon,
     filecolor=magenta,
     citecolor=bettergreen,
     urlcolor=azure,
     pdftitle={Semi-Invariant Rings: UFD and Codimension one Orbits}
 }

\makeatletter
\newcommand{\doublewidetilde}[1]{{%
  \mathpalette\double@widetilde{#1}%
}}
\newcommand{\double@widetilde}[2]{%
  \sbox\z@{$\m@th#1\widetilde{#2}$}%
  \ht\z@=.9\ht\z@
  \widetilde{\box\z@}%
}
\makeatother

\usepackage[style=alphabetic]{biblatex}
\addbibresource{references.bib}

\newcommand{\C}{{\mathcal C}}

\title{Semi-Invariant Rings: UFD and Codimension one Orbits}
\author{Charles Paquette $^{1,2}$, Deepanshu Prasad $^{1}$ AND David Wehlau $^{1,2}$}
\address{$^1$Department of Mathematics and Statistics, Queen's University, Kingston ON, Canada}
\address{$^2$Department of Mathematics and Computer Science, Royal Military College of Canada, Kingston ON, Canada}
\email{charles.paquette.math@gmail.com, 19dp7@queensu.ca, david.wehlau@queesnu.ca}

\begin{document}

\begin{abstract}
    Let $A$ be a finite dimensional associative $\mathbb{K}$-algebra over an algebraically closed field $\mathbb{K}$ of characteristic zero. To $A$, we can associate its basic form that is given by a quiver $Q = (Q_0, Q_1)$ with an admissible ideal $R$. For a dimension vector $\beta$, we consider an irreducible component $\mathcal{C}$ of the module variety of $\beta$-dimensional representations of $A$. The reductive group ${\rm GL}_\beta(\mathbb{K}):= \prod_{i \in Q_0}{\rm GL}_{\beta_i}(\mathbb{K})$ acts on $\mathcal{C}$ by change of basis, and has a unique closed orbit. We consider the corresponding ring of semi-invariants $\SI(Q, \mathcal{C})$. We prove that if $\mathcal{C}$ is factorial and has maximal orbits of codimension one, then $\SI(Q, \mathcal{C})$ is a complete intersection and is not multiplicity free.  If $\mathcal{C}$ is not factorial, then this conclusion does not necessarily hold. We present examples showing that the codimension of the complete intersection can be arbitrarily large. Finally, we interpret our results in the case of hereditary algebras.
\end{abstract}

\maketitle

\section{Introduction}
We let $A$ denote a finite dimensional (associative and unital) algebra over an algebraically closed field $\mathbb{K}$ of characteristic zero. 
It is well known that up to Morita equivalence, we may assume that $A$ is an admissible quotient of a path algebra of a finite quiver. Thus, there exists a quiver $Q = (Q_0, Q_1)$, which is uniquely determined by $A$, and an admissible ideal $R$ of $\mathbb{K}Q$ such that $A\cong\mathbb{K}Q/R$. We will identify left $A$-modules with representations of $Q$ satisfying the relations from $R$.

Let $\beta$ be a dimension vector, that is, an element $(\beta_i)_{i \in Q_0}$ in the positive part $(\mathbb{Z}_{\ge 0})^{Q_0}$ of the Grothendieck group $K_0(A)$ of $A$.
We consider the representation space $\rep_{\beta}(Q, R)$, which parametrizes the $\beta$-dimensional representations of $(Q,R)$.  This is an affine variety, which is not necessarily irreducible. The general linear group $\GL_{\beta}(\K)\coloneqq\prod_{i \in Q_0}\GL_{\beta_i}(\K)$ acts by change of bases, so that the orbits corresponds to the isomorphism classes. This provides a geometric approach to study the representations of $(Q,R)$.

In \parencite{King}, King introduced moduli spaces of representations of quivers (with relations). In his work, King developed an algebraic notion of stability, which allows one to interpret Mumford's stability in the setting of quiver representations. The multiplicative characters of $\GL_{\beta}(\K)$ are in bijection with  $\Hom_{\mathbb{Z}}(\mathbb{Z}^{Q_0},\mathbb{Z})$, the space of weights. For a stability parameter (or weight) $\theta \in \Hom_{\mathbb{Z}}(\mathbb{Z}^{Q_0},\mathbb{Z})$, one considers the $\theta$-semistable representations in some irreducible component $\mathcal{C}$ of $\rep_{\beta}(Q,R)$. Up to an equivalence relation (called $S$-equivalence), these representations form a projective variety  $\mathcal{M}(\mathcal{C})^{\theta-ss}$, the moduli space of $\theta$-semistable representations in $\mathcal{C}$. These moduli spaces are projective varieties corresponding to certain subrings of the so-called rings of semi-invariants of $\mathcal{C}$.  The study of the semi-invariant ring $\SI(Q,\mathcal{C})$ provides a way to understand these moduli spaces. 
Specifically King showed that
\begin{equation}
\label{modsp}
    \mathcal{M}(\mathcal{C})^{\theta-ss}\coloneqq\mathrm{Proj}\left(\bigoplus_{n\geq0}\SI(Q,\mathcal{C})_{n\theta}\right)
\end{equation}
where $\bigoplus_{n\geq0}\SI(Q,\mathcal{C})_{n\theta}$ is a direct sum of weight spaces forming a graded subring of $\SI(Q,\mathcal{C})$, is a certain GIT-quotient.

A famous result of Sato-Kimura states that:
\begin{thmm}[Sato-Kimura]
    Let $G$ be a connected linear algebraic group and $V$ be a rational representation of $G$ such that the action has a dense open orbit. Then, the semi-invariant ring is a polynomial ring. (see \parencite[Lemma 9.4.2]{weyderk} or \parencite[Proposition 5, page-60]{satkim}))
\end{thmm}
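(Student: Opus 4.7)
The plan is to exhibit a finite set of irreducible semi-invariants, one for each codimension-one component of $V\setminus U$ (where $U$ denotes the dense open $G$-orbit), show they generate $\SI(G,V)$ as an algebra, and then verify their algebraic independence. First, decompose the closed $G$-invariant set $V\setminus U$ into irreducible components and select those $H_1,\ldots,H_r$ of codimension one in $V$. Because $\K[V]$ is a polynomial ring and hence a UFD, each $H_i$ is the zero set of an irreducible polynomial $f_i\in\K[V]$, unique up to a nonzero scalar. The $G$-stability of $H_i$ forces the principal ideal $(f_i)$ to be $G$-stable, so that $g\cdot f_i=\chi_i(g)\,f_i$ for some character $\chi_i\colon G\to\K^\times$; hence each $f_i$ lies in $\SI(G,V)$.

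Next I would show that every nonzero semi-invariant is, up to scalar, a monomial in the $f_i$. If $f\in\SI(G,V)$ is nonzero with weight $\chi$, then the vanishing locus of $f$ is a proper $G$-invariant closed subset, hence contained in $V\setminus U$, so its codimension-one components lie among the $H_i$. Factoring $f$ into irreducibles in the UFD $\K[V]$ and invoking the connectedness of $G$ (which prevents $G$ from permuting the irreducible factors nontrivially, making each factor itself a semi-invariant) yields $f=c\prod_{i=1}^r f_i^{a_i}$ with $c\in\K^\times$ and $a_i\in\mathbb{Z}_{\ge0}$.

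To conclude that $\SI(G,V)=\K[f_1,\ldots,f_r]$ is a polynomial ring, it suffices to prove that the characters $\chi_1,\ldots,\chi_r$ are $\mathbb{Z}$-linearly independent. Suppose $\sum_i n_i\chi_i=0$. Writing $n_i=a_i-b_i$ with $a_i,b_i\ge 0$ produces two semi-invariants $\prod f_i^{a_i}$ and $\prod f_i^{b_i}$ of the same weight, whose ratio is therefore a $G$-invariant rational function on $V$. Since $G$ has a dense orbit, every $G$-invariant rational function on $V$ is constant; combined with unique factorization in $\K[V]$ this forces $a_i=b_i$ for each $i$, hence $n_i=0$. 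Algebraic independence of the $f_i$ then follows by decomposing any alleged polynomial relation into weight components and noting that distinct multi-indices now produce distinct weights.

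The main obstacle is the $\mathbb{Z}$-linear independence of the characters $\chi_i$; the two indispensable ingredients are the UFD property of $\K[V]$ and the fact that every $G$-invariant rational function on $V$ is constant, the latter being where the dense-orbit hypothesis is used decisively. The remaining steps amount to bookkeeping within the weight-space decomposition $\SI(G,V)=\bigoplus_{\chi}\SI(G,V)_\chi$, together with the connectedness of $G$ to keep irreducible factors from being shuffled.
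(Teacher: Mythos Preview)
The paper does not supply its own proof of the Sato--Kimura theorem; it merely records the statement and points to \parencite[Lemma 9.4.2]{weyderk} and \parencite[Proposition 5, page 60]{satkim}. So there is nothing in the paper to compare against directly.

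That said, your argument is correct and is essentially the classical proof one finds in those references. The three load-bearing facts are exactly the ones you isolate: (i) $\K[V]$ is a UFD, so each $G$-stable irreducible hypersurface $H_i\subseteq V\setminus U$ is cut out by an irreducible $f_i$, which connectedness of $G$ forces to be a weight vector; (ii) any nonzero semi-invariant $f$ has $G$-stable zero locus disjoint from the single dense orbit $U$, hence contained in $\bigcup_i H_i$, and unique factorization gives $f=c\prod f_i^{a_i}$; (iii) the characters $\chi_i$ are $\mathbb{Z}$-independent because any relation $\sum n_i\chi_i=0$ produces a nonconstant $G$-invariant rational function $\prod f_i^{n_i}$, contradicting the existence of a dense orbit. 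Your conclusion that distinct monomials then sit in distinct weight spaces, whence algebraic independence, is the right endgame.

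One small remark on exposition: in step (ii) you can bypass the appeal to connectedness for the factors of $f$ altogether, since once you know $\mathcal{V}(f)\subseteq\bigcup_i H_i$ and $\mathcal{V}(f)$ is pure of codimension one, the irreducible factorization $f=c\prod f_i^{a_i}$ follows immediately from the Nullstellensatz and the UFD property, without needing to argue that each irreducible factor is itself a semi-invariant. But this is cosmetic; your version is fine.
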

This theorem applies when $R=0$, since $\rep_{\beta}(Q)$ is an affine space. Therefore, in this case when $\rep_{\beta}(Q)$ contains an open $\GL_{\beta}$-orbit, the ring of semi-invariants of the $\beta$-dimensional representations is a polynomial ring.  In general, if $\mathcal{C}$ contains an open $\GL_{\beta}$-orbit, then $\SI(Q,\mathcal{C})$ is not necessarily a polynomial ring. However, one can adapt the proof of Sato and Kimura (see also \parencite{ChinKinWey}) to prove that if $\C$ is an orbit closure, then the ring of semi-invariants is multiplicity-free, that is, each weight space of $\SI(Q,\mathcal{C})$ is at most one dimensional.

A next natural step is to consider the case where $\mathcal{C}$ contains a codimension one orbit. In this case, if $\mathcal{C}$ is not an orbit closure, then $\mathcal{C}$ contains an open (dense) set of codimension one orbits. This leads us to study the semi-invariant ring $\SI(Q,\mathcal{C})$ when $\mathcal{C}$ contains an open dense set of codimension one $\GL_{\beta}$-orbits.  We require the condition that $\mathbb{K}[\mathcal{C}]$ is a UFD. 
Under this assumption, our methods allow us to describe the generators of $\SI(Q,\mathcal{C})$. This leads us to one of our main results:
\begin{thmx}
\phantomsection\label{ThmA}
    Let $\mathcal{C}$ be an irreducible component of $\rep_{\beta}(Q,R)$ which is not an orbit closure but which contains a codimension one orbit. Suppose further that the coordinate ring $\mathbb{K}[\mathcal{C}]$ is a UFD. Then, the semi-invariant ring $\SI(Q,\mathcal{C})$ is a complete intersection whose codimension is equal to  $\#\lbrace\text{monomials of weight $\chi$}\rbrace-2$, where $\chi$ is the unique minimal weight such that $\mathrm{dim}_{\mathbb{K}}\,\SI(Q,\mathcal{C})_{\chi}=2$.
\end{thmx}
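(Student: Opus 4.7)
The plan is to mimic the Sato--Kimura argument, exploiting the UFD hypothesis to translate semi-invariants into divisors. Since $\mathbb{K}[\mathcal{C}]$ is a UFD and $\GL_\beta$ is connected, every semi-invariant factors uniquely (up to units) into irreducible factors, each of which is itself a semi-invariant. Consequently $\SI(Q,\mathcal{C})$ is generated as a $\mathbb{K}$-algebra by its irreducible elements, and these are in bijection (up to scalar) with the irreducible $\GL_\beta$-stable prime divisors of $\mathcal{C}$. The plan then reduces to enumerating these divisors and identifying their multiplicative relations.

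First, I would split the irreducible $\GL_\beta$-stable divisors into two types. Because $\mathcal{C}$ contains a codimension-one orbit but is not an orbit closure, upper semicontinuity of orbit dimension forces the generic $\GL_\beta$-orbit to have codimension exactly one. Hence the union of codimension-one orbits is open dense, and their closures form a one-parameter family of linearly equivalent $\GL_\beta$-stable divisors, a pencil corresponding to sections of a single $\GL_\beta$-linearized line bundle. In weight terms this yields a weight $\chi$ with $\dim_{\mathbb{K}}\SI(Q,\mathcal{C})_\chi\ge 2$, and I would check that $\chi$ is the minimal such weight by showing any smaller candidate weight with a two-dimensional semi-invariant space would force an additional pencil of $\GL_\beta$-stable divisors inconsistent with the one-dimensional family of generic orbit closures. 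Fixing a basis $s_1,s_2$ of $\SI(Q,\mathcal{C})_\chi$, every generic orbit closure is $Z(as_1+bs_2)$ for some $[a:b]\in\mathbb{P}^1$. The remaining irreducible $\GL_\beta$-stable divisors lie in the proper closed non-generic locus and are therefore finite in number, call them $Z(t_1),\dots,Z(t_m)$ with $t_j$ irreducible semi-invariants of weights $\chi_j$. Together, $s_1,s_2,t_1,\dots,t_m$ generate $\SI(Q,\mathcal{C})$ by the factorization step above.

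For the relations, consider the weight-preserving surjection $\pi\colon F=\mathbb{K}[X_1,X_2,Y_1,\dots,Y_m]\twoheadrightarrow \SI(Q,\mathcal{C})$ with kernel $I$. Letting $N$ denote the number of monomials of $F$ of weight $\chi$ (the two variables $X_1,X_2$ together with all monomials in the $Y_j$'s of total weight $\chi$), the $\chi$-component of $\pi$ surjects from an $N$-dimensional space onto the two-dimensional $\SI(Q,\mathcal{C})_\chi$, yielding $N-2$ linearly independent syzygies of the form $M(Y)=\lambda s_1+\mu s_2$. I would then argue that these $N-2$ syzygies generate $I$ and form a regular sequence, exhibiting $\SI(Q,\mathcal{C})$ as a complete intersection of codimension $N-2$. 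For generation, the idea is that any homogeneous relation among $s_1,s_2,t_1,\dots,t_m$ can, by unique factorization in $\mathbb{K}[\mathcal{C}]$, be reduced to a linear dependency among coprime monomials of a fixed weight, and such dependencies exist only through the $s_1\leftrightarrow s_2$ ambiguity at weight $\chi$. For the regular-sequence conclusion, a Krull-dimension calculation comparing $\dim F=m+2$ with the transcendence degree of the fraction field of $\SI(Q,\mathcal{C})$, computed from the generic $\SL_\beta$-orbit dimension, will match the expected codimension $N-2$. The main obstacle will be the generation argument: the UFD hypothesis is exactly what prevents ``hidden'' relations among coprime semi-invariants and thereby forces every syzygy into weight $\chi$.
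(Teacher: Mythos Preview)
Your divisor-theoretic strategy is the same as the paper's, but the step you call ``a pencil corresponding to sections of a single $\GL_\beta$-linearized line bundle'' is where the real work lies, and you have not justified it. You need both that the irreducible semi-invariant $f_M$ cutting out each generic orbit closure $\overline{\mathcal{O}(M)}$ has weight independent of $M$, and that the span of all such $f_M$ is exactly two-dimensional; neither follows formally from ``the codimension-one orbits form a one-parameter family on a factorial variety''. The paper handles this with a shrinking trick: it removes finitely many orbit closures from $U$ so that the resulting list $f_1,\dots,f_r$ of boundary-divisor equations already contains a monomial pair $(p,q)$ of the minimal weight $\chi$, and then proves that every remaining orbit closure is $\mathcal{V}(\alpha p-q)$ for a unique $\alpha\in\mathbb{K}^*$. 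The map $\phi\colon M\mapsto q(M)/p(M)$ is the engine of the whole argument: it shows the $f_M$ really do form a pencil, gives $\dim_{\mathbb{K}}\SI(Q,\mathcal{C})_\chi=2$ and the uniqueness of $\chi$, and identifies the finitely many $\gamma\in\mathbb{K}^*\setminus\img(\phi)$ as the source of all relations.

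Two further divergences. First, your generating set $\{s_1,s_2,t_1,\dots,t_m\}$ is not the paper's $\{f_1,\dots,f_r\}$; your $s_i$ may already be monomials in the $t_j$, making your presentation redundant, and the phrase ``monomials of weight $\chi$'' in the theorem refers to monomials in the $f_i$, so your $N$ need not match the stated codimension. Second, the paper does not establish the complete-intersection property via a Krull-dimension comparison with generic $\SL_\beta$-orbit dimension (which would itself require justification); instead, each relation is a trinomial $\lambda_iZ_i-\gamma_ip+q$ with the $Z_i$ pairwise-coprime monomials in the $f_i$, and the Jacobian matrix with respect to the $f_j$ is then visibly of full rank from this coprimality. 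Your sketch ``reduce any relation via unique factorization to a linear dependency among coprime monomials'' is in the right spirit but does not substitute for the explicit basis description $\{p^iq^jX\}_{i+j=n}$ of every weight space that the paper proves and uses to show the trinomials generate the ideal of relations.
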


The above theorem is stated in the context of representations of quivers with relations.  Our techniques can be applied in the following more general setting.  Let $G$ be a connected reductive algebraic group acting regularly on a factorial variety $\mathcal{C}$.  We can define the ring of semi-invariants as $\mathbb{K}[\mathcal{C}]^{[G,G]}$ where $[G,G]$ is the derived subgroup of $G$.  As in the quiver case this ring has a weight space decomposition given by the 
multiplicative characters of $G$.  Theorem~\ref{ThmA} applies when $\mathcal{C}$ has a single closed orbit and its maximal orbits are of co-dimension 1.  

\medskip
For hereditary algebras, the variety $\rep_\beta(Q)$ is irreducible and its structure depends on the so-called canonical decomposition of $\beta$, which is a decomposition of $\beta$ into a sum of special dimension vectors known as Schur roots. These roots fall into three types, the real, isotropic and imaginary Schur roots. A description of the semi-invariant ring is known when the decomposition involves only real Schur roots or a single isotropic Schur root. 

If the canonical decomposition of the dimension vector $\beta$ includes real Schur roots and exactly one isotropic Schur root, then we call such dimension vector $\emph{almost prehomogenous}$. We study some  properties of these dimension vectors in Proposition \ref{alprehom}. By applying Theorem~\ref{ThmA} to a quiver without relations, we obtain a description of the semi-invariant ring $\SI(Q, \beta)$, where $\beta$ is almost prehomogeneous. This yields the second main result of this article and corroborates, albeit in a slightly weakened form, the result outlined in \parencite[Corollary 6.3]{weypaq}.
\begin{thmx}
    Let $Q$ be an acyclic quiver without any relations and let $\beta$ be a dimension vector.
    Suppose that the canonical decomposition of $\beta$ contains exactly one isotropic Schur root and that all the other roots occurring in the decomposition are real Schur roots. Then the semi-invariant ring $\SI(Q,\beta)$ is a complete intersection.
\end{thmx}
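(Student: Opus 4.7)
The plan is to apply Theorem~\ref{ThmA} to the irreducible variety $\mathcal{C} = \rep_\beta(Q)$. Because $Q$ is acyclic with no relations, $\rep_\beta(Q)$ is an affine space, hence irreducible with polynomial---and in particular UFD---coordinate ring, so two of the three hypotheses of Theorem~\ref{ThmA} are automatic. The task reduces to showing that $\mathcal{C}$ contains a codimension one $\GL_\beta$-orbit and that $\mathcal{C}$ is not an orbit closure.

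Let $\beta = \alpha + \sum_{i=1}^{k} n_i \beta_i$ be the canonical decomposition, where $\alpha$ is the (unique) isotropic Schur root appearing with multiplicity one, and the $\beta_i$ are pairwise distinct real Schur roots. I would invoke Schofield's description of the canonical decomposition together with Proposition~\ref{alprehom} to write a generic representation $M$ of dimension $\beta$ in the form $M = M_\alpha \oplus \bigoplus_i M_{\beta_i}^{n_i}$, with $M_\alpha$ Schurian of dimension $\alpha$ and each $M_{\beta_i}$ the unique rigid indecomposable of dimension $\beta_i$. The defining properties of the canonical decomposition give $\Ext^1(M_{\beta_i}, M_{\beta_j}) = 0$ for $i \neq j$ and $\Ext^1(M_\alpha, M_{\beta_i}) = \Ext^1(M_{\beta_i}, M_\alpha) = 0$, while rigidity of $M_{\beta_i}$ gives $\Ext^1(M_{\beta_i}, M_{\beta_i}) = 0$. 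Hence
\[
    \Ext^1(M, M) \cong \Ext^1(M_\alpha, M_\alpha).
\]
Since $\alpha$ is an isotropic Schur root, $\Hom(M_\alpha, M_\alpha) = \mathbb{K}$ and $\langle \alpha, \alpha \rangle = 0$, so the Euler form identity $\langle \alpha, \alpha \rangle = \dim \Hom(M_\alpha, M_\alpha) - \dim \Ext^1(M_\alpha, M_\alpha)$ yields $\dim_\mathbb{K} \Ext^1(M_\alpha, M_\alpha) = 1$.

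Using the standard identity that the codimension in $\rep_\beta(Q)$ of the $\GL_\beta$-orbit of $M$ equals $\dim_\mathbb{K} \Ext^1(M, M)$, the generic orbit has codimension exactly one; because this property holds on an open dense subset of $\mathcal{C}$, there is a codimension one orbit and no orbit is dense in $\mathcal{C}$, so $\mathcal{C}$ cannot be an orbit closure. All three hypotheses of Theorem~\ref{ThmA} are then in place, and it yields that $\SI(Q, \beta)$ is a complete intersection.

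The main obstacle I anticipate lies in importing the properties of the canonical decomposition---the existence of a generic $M$ of the stated form, and the vanishing of all the required Ext groups between its summands---cleanly; I would treat these as citable black boxes from Schofield's work and from Proposition~\ref{alprehom} rather than reprove them. A delicate book-keeping point that must be used explicitly is that the isotropic Schur root appears with multiplicity exactly one: were the multiplicity $m \geq 2$, the generic representation would acquire several non-isomorphic Schurian summands of dimension $\alpha$, each contributing a one-dimensional self-extension, and so the codimension of the generic orbit would be $m$ rather than one, placing the problem outside the scope of Theorem~\ref{ThmA}.
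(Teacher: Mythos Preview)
Your proposal is correct and follows essentially the same route as the paper: verify the hypotheses of Theorem~\ref{ThmA} for $\mathcal{C}=\rep_\beta(Q)$ and invoke it. The paper's proof is a one-liner citing Proposition~\ref{alprehom} (whose implication $(4)\Rightarrow(2)$ is exactly the $\Ext$ computation you carry out) together with Theorem~\ref{mainthm}; you have simply unpacked that implication rather than quoting it.
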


Finally, we consider a more general case where the orbits of maximal dimension are not necessarily of codimension one:
\begin{thmx}
    If there is an irreducible component $\mathcal{C}\subseteq \rep_{\beta}(Q,R)$ such that $\mathcal{C}$ is not an orbit closure and the coordinate ring $\mathbb{K}[\mathcal{C}]$ is a UFD, then the semi-invariant ring $\SI(Q,\mathcal{C})$ is not multiplicity free.
\end{thmx}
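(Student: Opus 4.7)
The plan is to produce two $\mathbb{K}$-linearly independent semi-invariants of the same weight, obtained as the numerator and denominator of an invariant rational function written in lowest terms. Since $\mathcal{C}$ is irreducible and not a $\GL_\beta$-orbit closure, the maximal orbit dimension $d$ on $\mathcal{C}$ satisfies $d<\dim\mathcal{C}$, and Rosenlicht's theorem on geometric quotients gives $\operatorname{trdeg}_{\mathbb{K}}\mathbb{K}(\mathcal{C})^{\GL_\beta}=\dim\mathcal{C}-d\geq 1$; pick any non-constant $\phi\in\mathbb{K}(\mathcal{C})^{\GL_\beta}$. By the UFD hypothesis, I write $\phi=f/g$ in lowest terms with $f,g\in\mathbb{K}[\mathcal{C}]$ and $\gcd(f,g)=1$. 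For any $\sigma\in\GL_\beta$, the identity $\sigma(f)\,g=f\,\sigma(g)$ combined with $\gcd(\sigma(f),\sigma(g))=1$ (preserved by the automorphism $\sigma$) forces $g$ and $\sigma(g)$ to divide one another, so $\sigma(g)=c(\sigma)\,g$ for some $c(\sigma)\in\mathbb{K}[\mathcal{C}]^{*}$.

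The main obstacle is upgrading ``$c(\sigma)$ is a unit of $\mathbb{K}[\mathcal{C}]$'' to ``$c(\sigma)$ is a scalar.'' I would resolve this by regarding $c$ as a single regular function $c:\GL_\beta\times\mathcal{C}\to\mathbb{K}^{*}$, i.e., a unit in the coordinate ring $\mathbb{K}[\GL_\beta\times\mathcal{C}]$, and applying the classical theorem on units of a product of geometrically irreducible varieties over an algebraically closed field:
\[
\mathbb{K}[\GL_\beta\times\mathcal{C}]^{*}/\mathbb{K}^{*}\;\cong\;\mathbb{K}[\GL_\beta]^{*}/\mathbb{K}^{*}\;\oplus\;\mathbb{K}[\mathcal{C}]^{*}/\mathbb{K}^{*}.
\]
This decomposition gives $c(\sigma,x)=\lambda\,u(\sigma)\,v(x)$ for some $\lambda\in\mathbb{K}^{*}$, $u\in\mathbb{K}[\GL_\beta]^{*}$, and $v\in\mathbb{K}[\mathcal{C}]^{*}$. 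Specializing at $\sigma=e$, where $c(e,x)=1$, forces $v$ to be constant in $x$, so $c(\sigma,x)$ is independent of $x$ and $c(\sigma)\in\mathbb{K}^{*}$. Thus $g$ is a semi-invariant of weight $\chi:=c$; symmetrically $f$ is a semi-invariant, and the $\GL_\beta$-invariance of $\phi=f/g$ forces $f$ to have weight $\chi$ as well.

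Since $\phi$ is non-constant, $f$ and $g$ are $\mathbb{K}$-linearly independent, giving $\dim_{\mathbb{K}}\SI(Q,\mathcal{C})_{\chi}\geq 2$, which is precisely the assertion that $\SI(Q,\mathcal{C})$ is not multiplicity free.
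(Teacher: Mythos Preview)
Your argument is correct and follows the paper's overall strategy: invoke Rosenlicht's theorem to produce a nonconstant $\GL_\beta$-invariant rational function on $\mathcal{C}$, write it in lowest terms using the UFD hypothesis, and show that the numerator and denominator are linearly independent homogeneous semi-invariants of the same weight (this is the paper's Proposition~\ref{ratSI}). The one substantive difference lies in how you upgrade the unit $c(\sigma)\in\mathbb{K}[\mathcal{C}]^{*}$ to a scalar. The paper establishes once and for all that $\mathbb{K}[\mathcal{C}]^{*}=\mathbb{K}^{*}$ (Lemma~\ref{units}): any unit is a semi-invariant of some character, and evaluating at the $\GL_\beta$-fixed point $0\in\mathcal{C}$ (the semisimple representation, which lies in every component) forces that character to be trivial. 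Your route via the Rosenlicht unit decomposition on $\GL_\beta\times\mathcal{C}$ bypasses the fixed point entirely, so it transfers verbatim to the more general setting the paper alludes to after Theorem~\ref{ThmA} (a connected reductive group acting on an arbitrary factorial affine variety), where no such fixed point need exist. One small point you should make explicit: the assertion that $c$ is a regular unit on $\GL_\beta\times\mathcal{C}$ is not automatic from knowing each $c(\sigma)\in\mathbb{K}[\mathcal{C}]^{*}$; it follows from the rationality of the action (so that $g$ lies in a finite-dimensional $\GL_\beta$-stable subspace), or alternatively one may simply apply the unit theorem on $\GL_\beta\times D(g)$, where $c=g(\sigma^{-1}x)/g(x)$ is visibly regular and your specialization at $\sigma=e$ already forces $c$ to be constant in $x$.
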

 A nice consequence of $\SI(Q,\mathcal{C})$ failing to be multiplicity free is that there exists a dimension vector $\beta'$ such that $\rep_{\beta'}(Q,R)$ contains infinitely many non-isomorphic bricks $\lbrace B_{\ell}\rbrace_{\ell\in\Lambda}$, i.e., infinitely many non-isomorphic modules $B_\ell$ with $\mathrm{End}_{A}(B_{\ell})\cong\mathbb{K}$.

\section{Preliminaries}

In this section, we recall some key notions on quiver representations and their invariant theory. For more details, the reader is referred to \cite{weyderk}.

A quiver $Q=(Q_0,Q_1)$ is a directed graph with $Q_0$ being the set of vertices, and $Q_1$ being the set of arrows. Such a quiver comes equipped with two
maps $t,h : Q_ 1 \rightarrow Q_ 0$ with $ta \coloneqq t(a)$, the tail of the arrow $a$, and $ha \coloneqq h(a)$, the head of the arrow.

A path $p$ in $Q$ is a sequence of arrows $p=a_r \cdots a_1$ such that $ha_i = ta_{i+1}$ for $1 \le i \le r-1$. The path $p$ is said to have length $r$.  We can extend the functions $h, t$ to paths by defining $hp = h a_r$ and $tp = ta_1$. For each $x \in Q_0$, we also define a trivial path $e_x$ of length $0$ with $he_x = te_x = x$. 

The path algebra $\mathbb{K}Q$ of the quiver $Q$ is constructed as follows. As a $\K$-vector space, it has a basis given by all paths in $Q$ (including the trivial paths). The multiplication of two paths $p, q$ is given by their concatenation, written as  $qp$, if $h(p) = t(q)$, and is zero otherwise.  

Recall that an ideal $R$ of $\mathbb{K}Q$ is \emph{admissible} if there exists a positive integer $t$ such that $R$ contains all paths of length $t$ and $R$ can be generated by elements that can be written as linear combination of paths of length at least two. The importance in admissible ideals lies in the well known fact that any basic finite dimensional algebra over $\K$ can be realized as an admissible quotient of a path algebra of a finite quiver over $\K$.

 An element $u$ in $\mathbb{K}Q$ is said to be \emph{uniform} if it is a linear combination of parallel paths, that is, paths sharing the same head and the same tail. We note that an admissible ideal can always be generated by uniform elements. Recall that a representation $V$ of $Q$ over $\K$ is given by $V = (\{V_x\}_{x \in Q_0}, \{V(a)\}_{a \in Q_1})$ where the $V_x$ are finite dimensional $\K$-vector spaces and $V(a): V_{ta} \to V_{ha}$ is a $\K$-linear map for each arrow $a \in Q_1$. For a path $p = a_r \cdots a_1$ in $Q$ and a representation $V$, we define $V(p)$ to be the composition $V(p) = V(a_r) \cdots V(a_1)$, which is a map from $V_{tp}$ to $V_{hp}$.  If $u = \sum_{i=1}^t\lambda_i p_i$ is a uniform element, then we define $V(u) = \sum_{i=1}^t\lambda_iV(p_i)$. A representation $V$ is said to be annihilated by $R$ if $V(u)=0$ for every uniform element $u$ in $R$.

In this paper, we assume that our algebra $A$ is given by $A = \mathbb{K}Q/R$ where $Q$ is a finite quiver and $R$ is an admissible ideal of $\mathbb{K}Q$.
We denote by ${\rm mod}\, A$ the category of finite dimensional left $A$-modules. It is well known that the category ${\rm mod}\, A$ is equivalent to the category ${\rep}(Q,R)$ of finite dimensional $\K$-representations of $Q$ annihilated by $R$. 

The dimension vector $\beta \in\mathbb{N}^{Q_0}$ of a representation $V \in \rep(Q,R)$ is defined by $\beta(x) = \dim_{\mathbb {K}} V_x$. For each $x \in Q_0$ we fix a basis for the $\beta(x)$ dimensional vector space $V_x$, 
and define
\[
    \rep_{\beta}(Q, R)\coloneqq\left\{(V(a))_{a\in Q_1} \in\prod_{a\in Q_1}\Hom_{\mathbb{K}}(V_{ta},V_{ha})\;\bigg|\; V(u) = 0,\text{ for every uniform } u \in R\right\}.
\]

We have a regular action of the algebraic group 
\[
    \GL_{\beta}\coloneqq\prod_{x\in Q_0}\GL_{\beta(x)}
\]
on $\rep_{\beta}(Q, R)$ given by $g\cdot V = (g_{ha}V(a)g_{ta}^{-1})_{a\in Q_1}$ where $(V(a))_{a\in Q_1} \in \rep_{\beta}(Q, R)$ and $(g_x)_{x \in Q_0} \in \GL_{\beta}$.
The orbits correspond to isomorphism classes of $\beta$-dimensional representations of $\mathbb{K}Q/R$ modules.

The representation space $\rep_{\beta}(Q,R)$ is an affine variety which is not necessarily irreducible. Consider one of its irreducible components $\mathcal{C}$. The action of $\GL_{\beta}$ on $\rep_{\beta}(Q,R)$ induces an action on the elements of the coordinate ring $\mathbb{K}[\mathcal{C}]$ as follows: for every $g\in\GL_{\beta},f\in\mathbb{K}[\mathcal{C}]$ and $x\in\mathcal{C}$, we have $g\cdot f(x)\coloneqq f(g^{-1}x)$.
Then,
\[
    \SI(Q,\mathcal{C})\coloneqq \mathbb{K}[\mathcal{C}]^{\SL_{\beta}},\text{ where }\SL_{\beta}\coloneqq\prod_{x\in Q_0}\SL_{\beta(x)}
\]
is called the ring of semi-invariant functions on $\mathcal{C}$.

For every weight $\sigma\in\mathbb{Z}^{Q_0}$, we can define a multiplicative character of $\GL_\beta$ as follows:
\[
    \begin{array}{ccccc}
         \chi_\sigma & : & \GL_\beta & \longrightarrow & \mathbb{K}^*\\
         \, & \, & (g_x)_{x\in Q_0} & \longmapsto & \prod_{x\in Q_0} \det(g_x)^{\sigma(x)}
    \end{array}
\]
The semi-invariant ring $\SI(Q,\mathcal{C})$ has a decomposition into weight spaces,
\[
    \SI(Q,\mathcal{C})=\bigoplus_{\sigma}\SI(Q,\mathcal{C})_{\sigma}
\]
where
\[
    \SI(Q,\mathcal{C})_{\sigma}\coloneqq\lbrace f\in\mathbb{K}[\mathcal{C}]\mid g\cdot f=\chi_{\sigma}(g)\,f,\text{ for every }g\in\GL_\beta\rbrace
\]
is called the space of semi-invariants of weight $\sigma$. A semi-invariant in a given weight space is said to be \emph{homogeneous}.

\section{Codimension one orbits and UFD}

Let $\beta$ be a dimension vector, and let $\mathcal{C}\subseteq {\rm rep}_{\beta}(Q,R)$ be an irreducible component.
 \begin{lem}
 \phantomsection\label{units}
     The units of the coordinate ring $\mathbb{K}[\mathcal{C}]$ are the elements of $\mathbb{K}^*$.
 \end{lem}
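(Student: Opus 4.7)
The plan is to use the Hilbert--Mumford criterion to reduce the claim to the elementary fact that the only units in $\mathbb{K}[t]$ are the nonzero constants. The key input is that every point of $\mathcal{C}$ degenerates inside $\mathcal{C}$ to the zero representation along a one-parameter subgroup.

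First I would locate the zero representation $0 = (V(a))_{a \in Q_1}$ with every $V(a) = 0$ inside $\mathcal{C}$. Since $R$ is admissible we have $R \subseteq J^2$, so $0$ satisfies every relation of $R$; its $\GL_\beta$-orbit is the singleton $\{0\}$, and it represents the unique $\beta$-dimensional semisimple module $\bigoplus_{x \in Q_0} S_x^{\beta(x)}$. By the standard fact that every orbit closure in an affine reductive-group variety contains a closed orbit, $\{0\}$ lies in the closure of every $\GL_\beta$-orbit of $\mathcal{C}$, and in particular $0 \in \mathcal{C}$.

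Next, given an arbitrary $V \in \mathcal{C}$, the Hilbert--Mumford criterion produces a one-parameter subgroup $\lambda \colon \mathbb{K}^* \to \GL_\beta$ with $\lim_{t \to 0} \lambda(t)\cdot V = 0$. The morphism $t \mapsto \lambda(t)\cdot V$ therefore extends to a morphism $\gamma \colon \mathbb{A}^1 \to \mathcal{C}$ sending $0$ to $0$, because $\mathcal{C}$ is closed in $\rep_\beta(Q,R)$ and the limit lies in $\mathcal{C}$. For $f \in \mathbb{K}[\mathcal{C}]^*$ the composition $\phi \coloneqq f \circ \gamma$ is an element of $\mathbb{K}[t]$ and, because $f$ is nowhere vanishing on $\mathcal{C}$, $\phi$ is nowhere vanishing on $\mathbb{A}^1$. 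Thus $\phi$ is a unit of $\mathbb{K}[t]$, hence a nonzero scalar, and in particular $f(V) = \phi(1) = \phi(0) = f(0)$. Since $V$ was arbitrary, $f$ equals the constant $f(0) \in \mathbb{K}^*$.

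The main obstacle is verifying the two standard inputs above: that the unique closed orbit in $\mathcal{C}$ is $\{0\}$, and that Hilbert--Mumford applies to produce the contracting one-parameter subgroup. Both are routine given the reductivity of $\GL_\beta$ and the admissibility of $R$, so the proof as a whole is short once these facts are in hand.
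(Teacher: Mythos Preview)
Your argument is correct and takes a genuinely different route from the paper. The paper first invokes Rosenlicht's lemma (cited from \textit{SanRitt}) to say that any unit $u$ must already be a homogeneous semi-invariant of some weight $\chi$, then evaluates the identity $u(g^{-1}x)v(x)=\chi(g)$ at the fixed point $x=0$ to force $\chi$ to be trivial, and finally uses (implicitly) that $\mathbb{K}[\mathcal{C}]^{\GL_\beta}=\mathbb{K}$ because $\{0\}$ is the unique closed orbit. Your proof bypasses both the Rosenlicht lemma and the ``invariants are constants'' step: you connect an arbitrary $V$ to $0$ by an $\mathbb{A}^1$-curve coming from a one-parameter subgroup and reduce to the trivial fact that $\mathbb{K}[t]^\times=\mathbb{K}^*$. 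The trade-off is that you invoke the Hilbert--Mumford/Kempf theorem, which is a somewhat heavier tool; in this particular setting you could even avoid it by writing down the contracting $1$-PS explicitly from the radical filtration of $V$ (since $R$ is admissible, the associated graded is the zero representation). Either way, both proofs rest on the same geometric fact that $0\in\mathcal{C}$ and is reached from every point, but yours exploits it more directly.
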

 \begin{proof}
     Clearly, any element of $\mathbb{K}^*$ is a unit in $\mathbb{K}[\mathcal{C}]$.\par
     Let $u\in\mathbb{K}[\mathcal{C}]$ be a unit. Therefore, there exists a unit $v\in\mathbb{K}[\mathcal{C}]$ such that $uv=1$, i.e., $u(x)v(x)=1$, for every $x\in\mathcal{C}$. By \parencite[Lemma 5.4, Chap.~6]{SanRitt}, we know that $u\in\SI(Q,\mathcal{C})_\chi$ for some character $\chi$.
    
      Thus $u(g^{-1}x)=\chi(g)u(x)$, for every $x\in\mathcal{C}$ and every $g\in\GL_{\beta}$. Multiplying by $v(x)$, we get,
     \[
         u(g^{-1}x)v(x)=\chi(g)u(x)v(x)=\chi(g)
     \]
     Take $x=0$, i.e., the point corresponding to the semi-simple representation. Therefore, $\chi(g)=u(0)v(0)=1$ for all $g \in \GL_\beta$ and $\chi$ is the trivial character. In particular, $u$ is an invariant function. Hence, $u\in\mathbb{K}^*$.
 \end{proof}
The next proposition gives a characterisation of the irreducible elements of $\SI(Q,\mathcal{C})$ when the coordinate ring $\mathbb{K}[\mathcal{C}]$ is a UFD. Recall an affine subvariety $W$ of $\mathcal{C}$ is \emph{pure} if all of its irreducible components have the same dimension.
 \begin{prop}
 \phantomsection\label{genSI}
     Assume that  $\mathbb{K}[\mathcal{C}]$ is a UFD and let $W\subseteq\mathcal{C}$ be a affine subvariety.  Then $W$ is a pure codimension one $\GL_{\beta}$-stable subvariety if and only if $W$ is the zero set of a homogeneous semi-invariant, i.e., $W=\mathcal{V}(f)$, for some homogeneous $f\in\SI(Q,\mathcal{C})$.
 \end{prop}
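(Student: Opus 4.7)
The plan is to prove the two implications separately; the reverse direction is largely formal, while the forward direction is where the UFD hypothesis does the essential work.

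For the ($\Leftarrow$) direction, suppose $W=\mathcal{V}(f)$ for some non-constant homogeneous semi-invariant $f$ of weight $\sigma$. Since $\mathbb{K}[\mathcal{C}]$ is a Noetherian domain and $f$ is neither zero nor a unit (the latter by Lemma \ref{units}), Krull's Hauptidealsatz gives that $\mathcal{V}(f)$ is pure of codimension one. Stability is immediate: if $f(x)=0$ and $g\in\GL_\beta$, then $f(g^{-1}x)=\chi_\sigma(g)f(x)=0$, so $g^{-1}x\in W$.

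For the ($\Rightarrow$) direction, decompose $W=W_1\cup\cdots\cup W_r$ into irreducible components, each of codimension one. Since $\GL_\beta=\prod_{x\in Q_0}\GL_{\beta(x)}$ is connected and acts continuously on the finite set of irreducible components of $W$, it must fix each $W_i$ setwise. Each $W_i$ corresponds to a height-one prime $P_i\subseteq\mathbb{K}[\mathcal{C}]$, and since $\mathbb{K}[\mathcal{C}]$ is a UFD, every such prime is principal; write $P_i=(f_i)$ with $f_i$ irreducible. It then remains to show each generator $f_i$ can be taken to be a semi-invariant, for then $f\coloneqq f_1\cdots f_r$ will be a homogeneous semi-invariant whose zero set is exactly $W$.

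To see this, note that since $g\cdot P_i=P_i$ and $P_i$ is principal, one has $g\cdot f_i=u(g)f_i$ for some unit $u(g)\in\mathbb{K}[\mathcal{C}]^*$; by Lemma \ref{units}, $u(g)\in\mathbb{K}^*$. The map $u:\GL_\beta\to\mathbb{K}^*$ is multiplicative (from associativity of the action) and regular (the $\GL_\beta$-action on $\mathbb{K}[\mathcal{C}]$ is regular, so evaluation at any point where $f_i$ is nonzero expresses $u$ as a regular function), hence a character $\chi_{\sigma_i}$. Therefore $f_i\in\SI(Q,\mathcal{C})_{\sigma_i}$, completing the argument.

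The main obstacle is making sure the components are not merely permuted but individually preserved by $\GL_\beta$ (which requires connectedness), and that the unit $u(g)$ arising from the principal-ideal argument defines a bona fide algebraic character rather than just a set-theoretic homomorphism. Once these two points are in hand, the UFD hypothesis reduces the problem to elementary factorization, and Lemma \ref{units} ties off the loose end about what ``unit'' means in $\mathbb{K}[\mathcal{C}]$.
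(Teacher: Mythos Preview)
Your proof is correct and follows essentially the same route as the paper's: reduce to the irreducible case via connectedness of $\GL_\beta$, use the UFD hypothesis to write each component as $\mathcal{V}(f_i)$ for an irreducible $f_i$, and then argue that $g\cdot f_i$ and $f_i$ are associates with the unit lying in $\mathbb{K}^*$ by Lemma~\ref{units}. The only differences are cosmetic: you spell out the $(\Leftarrow)$ direction via the Hauptidealsatz where the paper simply calls it trivial, and you justify regularity of the cocycle $u$ where the paper leaves this implicit.
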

 \begin{proof}
    The sufficiency is trivial.  For the other direction, since $\GL_\beta$ is connected, it is sufficient to consider the case where $W$ is irreducible. Since $\mathbb{K}[\mathcal{C}]$ is a UFD and $W$ is of codimension one and irreducible, we conclude that $W=\mathcal{V}(f)$, for some irreducible element $f\in\mathbb{K}[\mathcal{C}]$. Since $W$ is $\GL_{\beta}$-stable, we have $\mathcal{V}(g\cdot f)=\mathcal{V}(f)$, for every $g\in\GL_{\beta}$. By Hilbert's Nullstellensatz, we get, $\sqrt{(g\cdot f)}=\sqrt{(f)}$. Using the fact that $\mathbb{K}[\mathcal{C}]$ is a UFD, and that $g\cdot f$ and $f$ are irreducible, and hence prime, we see that $g\cdot f$ and $f$ are associates, i.e., $g\cdot f=u_{g}f$, for some unit $u_{g}\in\mathbb{K}[\mathcal{C}]$.
   
    By Lemma \ref{units}, we know that $u_{g}\in\mathbb{K}^*$. It follows that $\chi:\GL_{\beta}\rightarrow \mathbb{K}^{*}$, defined by $\chi(g)\coloneqq u_{g}$ is a multiplicative character of $\GL_{\beta}$, and hence, $f$ is a homogeneous semi-invariant. 
 \end{proof}

Let $\mathcal{C}$ be an irreducible component of $\rep_{\beta}(Q,R)$. For the rest of this section we assume that $\mathbb{K}[\mathcal{C}]$ is a UFD and that $\mathcal{C}$ is not an orbit closure and $\mathcal{C}$ contains a codimension one orbit. 

By upper semicontinuity of orbit dimension, it follows that the union $U$ of all codimension one orbits forms an open subset of $\mathcal{C}$. Let $\mathcal{I}$ be a complete set of representatives of these orbits. Then $U = \bigcup_{M \in \mathcal{I}}\mathcal{O}(M)$. 

Our goal is to describe the semi-invariant ring $\SI(Q,\mathcal{C})$ in the above setting. By Hilbert's finiteness theorem the ring $\SI(Q,\mathcal{C})$ is a finitely generated $\mathbb{K}$-algebra (see \parencite[Corollary 9.2.8]{weyderk}). Therefore, we first determine a set of generators, and then the algebraic relations among these generators.
\subsection{Generators}
In this subsection, we describe the generators of the semi-invariant ring $\SI(Q,\mathcal{C})$.
Decompose the complement $U^\mathsf{c}$ of $U$ into irreducible components, namely 
\[
    U^\mathsf{c}=\mathcal{Z}_1\cup\dotsb\cup\mathcal{Z}_r\cup\mathcal{Z}_{r+1}\cup\dotsb\cup\mathcal{Z}_t
\]
where we assume that $\mathrm{codim}\, \mathcal{Z}_i=1$, for $1\leq i\leq r$, and $\mathrm{codim}\,\mathcal{Z}_i>1$, for $i>r$. We note that $r$ may be zero. With the above notation we are ready to prove the following key lemma.
\begin{lem}
\phantomsection\label{2dimweight}
    There exists at least one weight $\nu$ such that $\dim_{\mathbb{K}}\SI(Q,\mathcal{C})_{\nu}\geq2$.
\end{lem}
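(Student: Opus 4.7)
I would argue by contradiction: assume that $\dim_{\mathbb{K}}\SI(Q,\mathcal{C})_{\nu}\leq 1$ for every weight $\nu$. The strategy is to produce infinitely many pairwise non-associate prime elements of $\mathbb{K}[\mathcal{C}]$ that all lie in $\SI(Q,\mathcal{C})$, and then show that each of them must divide one of finitely many generators of $\SI(Q,\mathcal{C})$, yielding a contradiction via unique factorization.

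\textbf{Step 1: Infinitely many codimension one orbits.} Since $\mathcal{C}$ is irreducible of dimension $d$ and the open subset $U\subseteq\mathcal{C}$ has dimension $d$, while every orbit $\mathcal{O}(M)$ with $M\in\mathcal{I}$ has dimension $d-1$, the set $\mathcal{I}$ must be infinite; otherwise $U$ would be a finite union of $(d-1)$-dimensional orbits and would have dimension strictly less than $d$. For each $M\in\mathcal{I}$, the closure $\overline{\mathcal{O}(M)}$ is an irreducible, $\GL_{\beta}$-stable subvariety of $\mathcal{C}$ of codimension one, so Proposition \ref{genSI} supplies a homogeneous semi-invariant $f_M\in\SI(Q,\mathcal{C})$ of some weight $\chi_M$ with $\overline{\mathcal{O}(M)}=\mathcal{V}(f_M)$, and $f_M$ is irreducible (hence prime) in the UFD $\mathbb{K}[\mathcal{C}]$. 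Distinct orbits give distinct zero loci, so the family $\{f_M\}_{M\in\mathcal{I}}$ consists of pairwise non-associate primes.

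\textbf{Step 2: Each $f_M$ is a scalar multiple of a monomial in the generators.} By Hilbert's finiteness theorem, fix homogeneous generators $s_1,\dots,s_n$ of $\SI(Q,\mathcal{C})$ of weights $\mu_1,\dots,\mu_n$. Since $\SI(Q,\mathcal{C})$ is $\mathbb{Z}^{Q_0}$-graded, every weight space $\SI(Q,\mathcal{C})_{\chi}$ is spanned by the monomials $s_1^{a_1}\cdots s_n^{a_n}$ with $\sum a_k\mu_k=\chi$. Under the standing assumption $\dim \SI(Q,\mathcal{C})_{\chi_M}\leq 1$, the nonzero element $f_M$ must be a scalar multiple of a single such monomial $s_1^{a_1}\cdots s_n^{a_n}$ in $\mathbb{K}[\mathcal{C}]$. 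Since $f_M$ is prime in $\mathbb{K}[\mathcal{C}]$ and divides this monomial, it divides some $s_{k(M)}$.

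\textbf{Step 3: Contradiction by pigeonhole on prime divisors.} In the UFD $\mathbb{K}[\mathcal{C}]$, each generator $s_k$ has only finitely many non-associate prime divisors (those appearing in its factorization). The map $M\mapsto k(M)$ sends the infinite set $\mathcal{I}$ into the finite set $\{1,\dots,n\}$, so by pigeonhole some fixed $s_k$ is divisible by infinitely many pairwise non-associate primes among the $f_M$, which is impossible. This contradicts the assumption, so some weight space has dimension at least two.

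\textbf{Main obstacle.} The only delicate point is the passage from ``$\dim\SI(Q,\mathcal{C})_{\chi_M}\leq 1$'' to ``$f_M$ is a scalar multiple of a monomial in the fixed homogeneous generators''; this rests on the fact that weight spaces of a weight-graded finitely generated algebra are spanned by monomials in any system of homogeneous generators, which is standard but should be invoked explicitly. Once that is in hand, the rest is a clean pigeonhole argument inside the ambient UFD $\mathbb{K}[\mathcal{C}]$.
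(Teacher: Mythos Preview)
Your proof is correct and follows essentially the same approach as the paper: both assume for contradiction that all weight spaces are at most one-dimensional, use Proposition~\ref{genSI} and the UFD hypothesis to obtain infinitely many pairwise non-associate prime semi-invariants $f_M$, invoke Hilbert's finiteness theorem for a finite homogeneous generating set, and then derive a contradiction from primality. The only cosmetic difference is the endgame: the paper picks a single $f_{M'}$ not associate to any generator and observes it must then be a nontrivial linear combination of monomials (contradicting the one-dimensionality assumption directly), whereas you push each $f_M$ into a prime divisor of some generator and use pigeonhole; both are equivalent unwindings of the same finiteness obstruction.
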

\begin{proof}
    Suppose, by way of contradiction, that there does not exist any weight $\sigma$ such that $\dim_{\mathbb{K}}\SI(Q,\mathcal{C})_{\sigma}\geq2$. Since $\mathbb{K}[\mathcal{C}]$ is a UFD and the orbits $\mathcal{O}(M)$, for $M\in\mathcal{I}$, are of codimension one, therefore, for every $M\in\mathcal{I}$, we can write $\overline{\mathcal{O}(M)}=\mathcal{V}(f_{M})$, for some $f_{M}\in\mathbb{K}[\mathcal{C}]$. By Proposition \ref{genSI}, we know that $f_{M}$ is an irreducible semi-invariant, for every $M\in\mathcal{I}$.
    By our assumption, the weights of the $f_{M}$ are all distinct. 
    By Hilbert's finiteness theorem, there exists a finite set of homogeneous semi-invariants $x_1,x_2,\dotsc,x_n$ which generate $\SI(Q,\mathcal{C})$.
    Clearly we may assume this set of generators is a linearly independent set.
    Now, there exists some $M'\in\mathcal{I}$ such that $f_{M'}$ is not a scalar multiple of $x_l$, for any $l\in\lbrace1,2,\dotsc,n\rbrace$. Since $f_{M'}$ is irreducible, it cannot be a monomial in the $x_i$. Therefore, $f_{M'}$ is a linear combination of at least two distinct monomials in the $x_i$. If $\nu$ denotes the weight of these two semi-invariants, we see that $\dim_{\mathbb{K}}\,\SI(Q,\mathcal{C})_{\nu}\geq2$.
\end{proof}

For $1\leq i\leq r$, $\mathcal{Z}_i$ is an irreducible closed set of codimension one in $\mathcal{C}$. Since $\mathbb{K}[\mathcal{C}]$ is a UFD, we know there exists $f_i\in\mathbb{K}[\mathcal{C}]$ such that $\mathcal{Z}_i=\mathcal{V}(f_i)$, for $1\leq i\leq r$. 
\begin{lem}
    The irreducible elements $f_1,f_2,\dotsc,f_r$ are homogeneous elements of $\SI(Q,\mathcal{C})$.
\end{lem}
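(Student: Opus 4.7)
The plan is to invoke Proposition~\ref{genSI} directly. Since each $\mathcal{Z}_i$ is an irreducible subvariety of $\mathcal{C}$ of codimension one, cut out by a single element $f_i \in \mathbb{K}[\mathcal{C}]$ (using that $\mathbb{K}[\mathcal{C}]$ is a UFD), it suffices to show that $\mathcal{Z}_i$ is $\GL_\beta$-stable. Proposition~\ref{genSI} will then produce a character $\chi_i$ so that $f_i \in \SI(Q,\mathcal{C})_{\chi_i}$.

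The first step is to observe that $U$, being the union of all orbits of a fixed (maximal) dimension, is $\GL_\beta$-stable: whether a point lies on an orbit of codimension one depends only on the orbit it belongs to. Consequently $U^\mathsf{c} = \mathcal{Z}_1 \cup \dotsb \cup \mathcal{Z}_t$ is also $\GL_\beta$-stable.

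The second step is to argue that $\GL_\beta$ preserves each individual irreducible component $\mathcal{Z}_i$, not merely the union. Since $\GL_\beta$ is a group of homeomorphisms, its action on $U^\mathsf{c}$ permutes the finitely many irreducible components, giving a homomorphism from $\GL_\beta$ to the symmetric group on $\{\mathcal{Z}_1,\ldots,\mathcal{Z}_t\}$. Because $\GL_\beta$ is connected and the symmetric group is discrete, this homomorphism is trivial; hence $g \cdot \mathcal{Z}_i = \mathcal{Z}_i$ for every $g \in \GL_\beta$ and every $i$.

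Finally, for each $1 \le i \le r$, the subvariety $\mathcal{Z}_i$ is irreducible, of pure codimension one, and $\GL_\beta$-stable, so Proposition~\ref{genSI} applies and yields a character such that $f_i$ is a homogeneous semi-invariant in that weight. I do not expect any serious obstacle here; the whole content lies in the connectedness argument that upgrades stability of the union $U^\mathsf{c}$ to stability of each component $\mathcal{Z}_i$, after which Proposition~\ref{genSI} does the remaining work.
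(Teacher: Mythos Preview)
Your proposal is correct and follows essentially the same approach as the paper: both arguments use the connectedness of $\GL_\beta$ to show that each irreducible component $\mathcal{Z}_i$ of $U^{\mathsf c}$ is individually $\GL_\beta$-stable, and then invoke Proposition~\ref{genSI}. The only cosmetic difference is that the paper phrases the connectedness step as ``each orbit is irreducible, hence contained in a single component,'' whereas you phrase it via the trivial homomorphism $\GL_\beta \to S_t$; these are two standard formulations of the same fact.
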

\begin{proof}
    Since $\GL_\beta$ is connected (hence irreducible), each orbit (of $\mathcal{Z}_1 \cup \cdots \cup \mathcal{Z}_r$) is irreducible. Hence, we see that the elements of $\GL_\beta$ fix each irreducible component $\mathcal{Z}_i$. Now, we apply Proposition \ref{genSI}.
\end{proof}
We define the cone of weights
\[
    \Sigma(Q,\mathcal{C})\coloneqq\lbrace\sigma\in\mathbb{Z}^{Q_0}\mid\SI(Q,\mathcal{C})_{\sigma}\not=0\rbrace
\]
We have a partial ordering on the weights $\sigma\in\Sigma(Q,\mathcal{C})$. For $\sigma_1,\sigma_2\in\Sigma(Q,\mathcal{C})$, we say $\sigma_1\succeq\sigma_2$, if there exists a semi-invariant in $\SI(Q,\mathcal{C})_{\sigma_1}$ divisible by some semi-invariant in $\SI(Q,\mathcal{C})_{\sigma_2}$. Now, it follows from Lemma \ref{2dimweight} that there exists a weight $\chi$ such that $\SI(Q, \mathcal{C})_\chi$ is at least two dimensional and $\chi$ is minimal with respect to this property. Let us fix such a weight $\chi$.
\begin{lem} 
\phantomsection\label{LemmaReducingU}
    By possibly reducing $U$ to a smaller open non-empty set, we may assume that $U^\mathsf{c}$ has $r$ irreducible components of codimension one: 
    $\mathcal{V}(f_1),\mathcal{V}(f_2),\dotsc,\mathcal{V}(f_r)$ where $r\geq2$ and such that there are two linearly independent monomials in $f_1, f_2,\dots,f_r$ having weight $\chi$.
\end{lem}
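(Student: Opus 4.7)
The plan is to start from two linearly independent weight-$\chi$ semi-invariants and shrink $U$ so that, in the new decomposition, every irreducible factor of these semi-invariants becomes the vanishing locus of one of the $f_i$'s. By Lemma \ref{2dimweight} and the choice of $\chi$, pick linearly independent $s_1, s_2 \in \SI(Q,\mathcal{C})_\chi$ and factor them into irreducibles in the UFD $\mathbb{K}[\mathcal{C}]$, listing the distinct irreducible factors (up to units) that occur in either factorization as $h_1, \dots, h_p$.

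The key subclaim---and the main point requiring care---is that each $h_j$ is itself a homogeneous semi-invariant. For $g \in \GL_\beta$, the equality $g \cdot s_k = \chi(g) s_k$ forces the irreducible factorizations of $g \cdot s_k$ and $s_k$ to agree up to units and permutation; hence $\GL_\beta$ acts on the finite set $\{[h_1], \dots, [h_p]\} \subseteq \mathbb{P}(\mathbb{K}[\mathcal{C}])$ of projective classes of these factors. Because $\GL_\beta$ is connected, this action must fix each $[h_j]$, so $g \cdot h_j = \lambda_j(g)\, h_j$ with $\lambda_j(g) \in \mathbb{K}^\ast$; by Lemma \ref{units}, $\lambda_j$ is a multiplicative character, so $h_j$ is a semi-invariant and $\mathcal{V}(h_j)$ is a codimension-one $\GL_\beta$-stable irreducible subvariety by Proposition \ref{genSI}.

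Now replace $U$ by $U' := U \setminus \bigcup_{j=1}^{p} \mathcal{V}(h_j)$. Since each $\mathcal{V}(h_j)$ has codimension one while $U$ is open and dense in $\mathcal{C}$, the set $U'$ is still a non-empty open $\GL_\beta$-stable subset. Its complement $(U')^\mathsf{c} = U^\mathsf{c} \cup \bigcup_{j} \mathcal{V}(h_j)$ has, as codimension-one irreducible components, exactly the original $\mathcal{Z}_1, \dots, \mathcal{Z}_r$ together with those $\mathcal{V}(h_j)$ not already appearing among the $\mathcal{Z}_i$'s (a codimension-one irreducible set cannot be absorbed into any higher-codimension $\mathcal{Z}_i$, nor strictly into any other codimension-one irreducible set). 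Relabel these codimension-one components as $\mathcal{V}(f_1), \dots, \mathcal{V}(f_r)$, for the new and possibly larger value of $r$. Each $f_i$ is an irreducible semi-invariant, and each $h_j$ is a scalar multiple of some $f_i$, so $s_1$ and $s_2$ become, up to scalars, two linearly independent monomials of weight $\chi$ in $f_1, \dots, f_r$. Finally, two linearly independent monomials of the same weight cannot be built from a single variable, so $r \ge 2$, finishing the proof.
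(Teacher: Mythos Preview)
Your proof is correct and follows essentially the same approach as the paper: both arguments pick two linearly independent weight-$\chi$ semi-invariants, factor them into irreducibles, and shrink $U$ so that every irreducible factor becomes one of the $f_i$'s. The only cosmetic difference is that the paper identifies the irreducible factors geometrically---observing that every $\GL_\beta$-stable irreducible codimension-one closed subset of $\mathcal{C}$ is either some $\mathcal{V}(f_i)$ or an orbit closure $\overline{\mathcal{O}(M)}$, and then removes the finitely many relevant orbit closures from $U$---whereas you argue algebraically that irreducible factors of a semi-invariant are themselves semi-invariants via the permutation action of the connected group on the factors; both routes land in the same place.
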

\begin{proof}
    Starting with the given set $U$, suppose that no two linearly independent
    monomials in the $f_i$ share same weight.  
   Take two linearly independent semi-invariants of weight $\chi$, say $y$ and $z$. Notice that the $\GL_\beta$-stable irreducible codimension one closed subsets in $\mathcal{C}$ are precisely the $\mathcal{V}(f_i)$ and the orbit closures of representations in $\mathcal{I}$. We consider the vanishing sets of $y$ and $z$ and decompose them into their irreducible components as follows:
    \begin{align*}
        \mathcal{V}(y)=\cup_{i\in S}\mathcal{V}(f_i)\,\bigcup\,\cup_{M\in \mathcal{I}_1}\overline{\mathcal{O}(M)}\\
        \mathcal{V}(z)=\cup_{i\in T}\mathcal{V}(f_i)\,\bigcup\,\cup_{M\in \mathcal{I}_2}\overline{\mathcal{O}(M)}
    \end{align*}
    where $S,T\subseteq\lbrace1,2,\dotsc,r\rbrace$ and $\mathcal{I}_1,\mathcal{I}_2\subseteq\mathcal{I}$ are finite. By our assumption, we may assume that $y$ is not a scalar multiple of a monomial in the $f_i$, hence, $\mathcal{I}_1$ is non-empty. 
    
    Now, we shrink the open set $U$ by removing the orbit closures $\overline{\mathcal{O}(M)}$, for $M\in\mathcal{I}_1\cup\mathcal{I}_2$, from $U$. 
    Therefore, we get a new open set in $\mathcal{C}$ which is a union of orbits of codimension one, say $U'=\bigcup_{M\in\mathcal{I}'}\mathcal{O}(M)$, and a new set of semi-invariants $f_j$ with the desired property that there are at least two monomials in the $f_j$ which are semi-invariants of the same weight. In particular, this shrinking procedure allows us to assume that $r\geq2$.
\end{proof}

From now on, we assume that the open set $U$ satisfies Lemma \ref{LemmaReducingU} and that $\mathcal{I}$ denotes a complete set of representatives of the orbits in $U$. Furthermore, the $\mathcal{V}(f_i)$ for $1 \le i \le r$ are the codimension one irreducible components of $U^\mathsf{c}$. Observe that if $\chi \succ \sigma$, then the weight space 
$\SI(Q,\mathcal{C})_\sigma$ is spanned by a single monomial in $f_i$.
\begin{defn}
    Two linearly independent monomials $p,q$ in the $f_i$, of equal weight  
    form a $\emph{monomial pair}$, denoted by $(p,q)$.
\end{defn}
\begin{lem}
\phantomsection\label{p,q}
    Let $(p,q)$ be a monomial pair. Then for every $M \in \mathcal{I}$, we have that $p(M)\ne 0$ and $q(M) \ne 0$.
\end{lem}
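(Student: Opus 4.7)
The plan is essentially immediate from the geometric setup established just before the lemma. The key observation is that, by construction, the codimension-one irreducible components of the complement $U^{\mathsf c}$ are exactly $\mathcal{V}(f_1), \dotsc, \mathcal{V}(f_r)$. In particular $\mathcal{V}(f_i) \subseteq U^{\mathsf c}$ for each $i$, so $\mathcal{V}(f_i) \cap U = \emptyset$. Any $M \in \mathcal{I}$ lies in $U$ (since $\mathcal{I}$ is a set of representatives of the orbits forming $U$), so $M \notin \mathcal{V}(f_i)$, which gives $f_i(M) \ne 0$ for every $i \in \{1,\dotsc,r\}$ and every $M \in \mathcal{I}$.

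From there, since $(p,q)$ is a monomial pair, both $p$ and $q$ are products of the $f_i$ with non-negative integer exponents, say $p = \prod_{i=1}^{r} f_i^{a_i}$ and $q = \prod_{i=1}^{r} f_i^{b_i}$. Evaluation at $M$ is a $\mathbb{K}$-algebra homomorphism $\mathbb{K}[\mathcal{C}] \to \mathbb{K}$, so $p(M) = \prod_{i} f_i(M)^{a_i}$ and $q(M) = \prod_{i} f_i(M)^{b_i}$. As each factor $f_i(M)$ is non-zero and $\mathbb{K}$ is a field, the products $p(M)$ and $q(M)$ are non-zero.

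There is no real obstacle here; the content of the lemma is already packaged into the choice of the $f_i$ as defining equations for the codimension-one components of $U^{\mathsf c}$. The role of this short lemma is simply to record, for the upcoming arguments, that any monomial pair produces two semi-invariants that are both non-vanishing on every representative of a codimension-one orbit in $U$, a fact that will be used to distinguish orbits through ratios of semi-invariants in the same weight space.
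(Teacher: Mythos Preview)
Your proof is correct and takes essentially the same approach as the paper: both reduce to the fact that each $\mathcal{V}(f_i)$ lies in $U^{\mathsf c}$ while $M$ lies in $U$, so no $f_i$ vanishes at $M$. Your version is slightly more direct (arguing $f_i(M)\ne 0$ first, then multiplying), whereas the paper phrases it as a contradiction from $p(M)=0$ forcing some $f_j$ to vanish on $\overline{\mathcal{O}(M)}$; the underlying idea is the same.
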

\begin{proof}
    Recall that $p$ and $q$ are monomials in the $f_i$. Since the $f_i$ are semi-invariants and $\mathcal{O}(M)\subseteq\overline{\mathcal{O}(M)}$ is dense and irreducible, if $p$ vanishes on some $\mathcal{O}(M)$, and hence on $\overline{\mathcal{O}(M)}$, then there exists $j$ such that $f_j$ divides $p$ and $f_j(\overline{\mathcal{O}(M)})=0$. This cannot happen by the choice of the $f_i$. Therefore, the statement follows.
\end{proof}
We fix a monomial pair $(p,q)$, and for $\alpha\in\mathbb{K}$, we define $h_\alpha\coloneqq\alpha p-q$. Then, $h_\alpha$ is a non-constant semi-invariant of weight $\chi$.
\begin{rmk}
    By Lemma \ref{p,q}, we know $p$ and $q$ do not vanish anywhere on $U$. 
    Therefore, for every $M\in\mathcal{I}$, we can find $\alpha$ such that $h_\alpha$ vanishes on $\overline{\mathcal{O}(M)}$. In particular, if $0=h_\alpha(M)=\alpha p(M)-q(M)$, then we get,
    \[
        \alpha=\frac{q(M)}{p(M)}.
    \]
\end{rmk}
By Lemma \ref{p,q}, we know that $\alpha\not=0$. Therefore, we get a map
\[
    \begin{array}{cccc}
    \phi: & \mathcal{I} & \longrightarrow & \mathbb{K}^*\\
    \textbf{ } & M & \longmapsto & \alpha=\frac{q(M)}{p(M)}
    \end{array}
\]
\begin{prop}
\phantomsection\label{irred}
    If $\alpha\in\img(\phi)$, then $h_\alpha$ is irreducible and there exists $M \in \mathcal{I}$ such that $\mathcal{V}(h_\alpha) = \overline{\mathcal{O}(M)}$.
\end{prop}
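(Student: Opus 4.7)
The plan is to first verify that $h_\alpha$ vanishes on some $\overline{\mathcal{O}(M)}$, next to prove $h_\alpha$ is irreducible via a reductio using the minimality of $\chi$, and finally to identify $\mathcal{V}(h_\alpha)$ with that orbit closure. To begin, I would pick $M \in \mathcal{I}$ with $\alpha = \phi(M) = q(M)/p(M)$; then $h_\alpha(M) = \alpha p(M) - q(M) = 0$. Because $h_\alpha \in \SI(Q,\mathcal{C})_\chi$ transforms by the character $\chi$ under $\GL_\beta$, its vanishing at one point of $\mathcal{O}(M)$ forces it to vanish on all of $\mathcal{O}(M)$, and then on $\overline{\mathcal{O}(M)}$ by continuity; equivalently, the irreducible defining equation of $\overline{\mathcal{O}(M)}$ divides $h_\alpha$ in the UFD $\mathbb{K}[\mathcal{C}]$.

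For irreducibility I would argue by contradiction, writing $h_\alpha = g_1 g_2$ with both $g_i$ non-units and (after further factoring) irreducible. Since $\mathbb{K}[\mathcal{C}]$ is a UFD whose only units are constants (Lemma \ref{units}) and $\GL_\beta$ is connected, the $\GL_\beta$-action cannot permute distinct irreducible factors; hence each $g_i$ must itself be a semi-invariant, of some weight $\sigma_i$ satisfying $\sigma_1 + \sigma_2 = \chi$ and $\sigma_i \neq 0$, so $\sigma_i \prec \chi$ strictly. By the minimality of $\chi$, each weight space $\SI(Q,\mathcal{C})_{\sigma_i}$ has dimension at most one, and by the observation immediately preceding the definition of a monomial pair each is spanned by a monomial in the $f_j$'s. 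Therefore $h_\alpha$ would itself be, up to a scalar, a monomial in the $f_j$'s, which would force $\mathcal{V}(h_\alpha) \subseteq \bigcup_j \mathcal{V}(f_j) \subseteq U^\mathsf{c}$. But the first paragraph places the nonempty set $\mathcal{O}(M) \subseteq U$ inside $\overline{\mathcal{O}(M)} \subseteq \mathcal{V}(h_\alpha)$, which is incompatible with $\mathcal{V}(h_\alpha) \subseteq U^\mathsf{c}$. This contradiction shows that $h_\alpha$ is irreducible.

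Once irreducibility is established, $\mathcal{V}(h_\alpha)$ is an irreducible codimension-one subvariety of $\mathcal{C}$ that contains the irreducible codimension-one subvariety $\overline{\mathcal{O}(M)}$, and therefore the two coincide. The delicate step I anticipate is the appeal to the observation: one must be sure that no irreducible orbit-closure defining equation $f_N$ (for some $N \in \mathcal{I}$) can secretly span a weight space $\SI(Q,\mathcal{C})_{\sigma_i}$ below $\chi$. This is built into the setup after Lemma \ref{LemmaReducingU}, but one should also note that the pair $(p,q)$ may without loss of generality be taken coprime in the $f_j$'s — otherwise dividing out the common $f_j$-factor would produce a monomial pair of weight strictly less than $\chi$, contradicting the minimality of $\chi$ — which keeps the final set-theoretic inclusion $\mathcal{V}(h_\alpha) \subseteq U^\mathsf{c}$ from being vacuously satisfied through $p$ or $q$.
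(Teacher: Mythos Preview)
Your argument is correct, but it takes a different route from the paper. The paper's proof is a three-line dimension count: having noted $\overline{\mathcal{O}(M)}\subseteq\mathcal{V}(h_\alpha)$, it asserts that since both sets have codimension one in $\mathcal{C}$ they must coincide, and irreducibility of $h_\alpha$ follows. You instead first establish irreducibility by a reductio through the minimality of $\chi$ and the observation that weight spaces strictly below $\chi$ are spanned by monomials in the $f_j$, and only then conclude $\mathcal{V}(h_\alpha)=\overline{\mathcal{O}(M)}$. Your argument is longer but supplies a detail the paper's compressed step leaves implicit: a priori $\mathcal{V}(h_\alpha)$ is merely pure of codimension one, so the containment only makes $\overline{\mathcal{O}(M)}$ one of possibly several components; your factorization argument is precisely what rules out extra components. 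One cosmetic point: writing $h_\alpha=g_1g_2$ with both $g_i$ irreducible suggests exactly two prime factors, whereas the argument really needs (and works verbatim for) a general factorization $h_\alpha=g_1\cdots g_k$. Your closing remarks about coprimality of $p,q$ are not actually needed for this proposition; that fact is established separately as Corollary~\ref{coprime}.
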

\begin{proof}
    Note that $h_\alpha\in\SI(Q,\mathcal{C})_\chi$. Since $\alpha\in\img(\phi)$, we have $\alpha=\frac{q(M)}{p(M)}$, for some $M\in\mathcal{I}$. Therefore, $\mathcal{V}(h_{\alpha})\supseteq\overline{\mathcal{O}(M)}$. Since both $\mathcal{V}(h_{\alpha})$ and $\overline{\mathcal{O}(M)}$ are of codimension one in $\mathcal{C}$, we conclude, $\mathcal{V}(h_\alpha)=\overline{\mathcal{O}(M)}$, and hence, $h_{\alpha}$ is irreducible.
\end{proof}
By combining Proposition \ref{irred} with the above remark, we get the following corollary.
\begin{cor}
\phantomsection\label{orbit}
    For every $M\in\mathcal{I}$, $\overline{\mathcal{O}(M)}=\mathcal{V}(h_\alpha)$, where $\alpha=\frac{q(M)}{p(M)}$, i.e. $\alpha\in\img(\phi)$.
\end{cor}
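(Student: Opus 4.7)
The plan is to observe that the corollary is essentially immediate from Proposition \ref{irred} combined with the construction of the map $\phi$, so the proof reduces to verifying that the $\alpha$ associated to $M$ really does land in $\img(\phi)$, and then chasing the equality of codimension-one closed sets.

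First I would fix $M \in \mathcal{I}$ and set $\alpha := q(M)/p(M)$. By Lemma \ref{p,q}, both $p(M)$ and $q(M)$ are nonzero, so $\alpha$ is a well-defined element of $\mathbb{K}^{*}$, namely $\alpha = \phi(M)$. This already proves the ``i.e., $\alpha \in \img(\phi)$'' clause of the statement. Next, by direct substitution,
\[
    h_{\alpha}(M) = \alpha\, p(M) - q(M) = 0,
\]
so $M$ lies in the zero set of the semi-invariant $h_{\alpha}$. Because $h_{\alpha}$ is $\SL_\beta$-invariant (its vanishing set is $\GL_\beta$-stable), we conclude that $h_{\alpha}$ vanishes on the entire orbit $\mathcal{O}(M)$, and hence on its closure $\overline{\mathcal{O}(M)}$.

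At this point I would invoke Proposition \ref{irred}: since $\alpha \in \img(\phi)$, the semi-invariant $h_{\alpha}$ is irreducible and $\mathcal{V}(h_{\alpha}) = \overline{\mathcal{O}(M')}$ for some $M' \in \mathcal{I}$. In particular $\mathcal{V}(h_{\alpha})$ is irreducible of codimension one in $\mathcal{C}$. Combining this with the inclusion $\overline{\mathcal{O}(M)} \subseteq \mathcal{V}(h_{\alpha})$ from the previous step, and noting that $\overline{\mathcal{O}(M)}$ is itself irreducible of codimension one, the equality of dimensions forces $\overline{\mathcal{O}(M)} = \mathcal{V}(h_{\alpha})$, which is exactly the statement of the corollary.

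There is no real obstacle here; the content of the result is already packaged in Proposition \ref{irred} and the preceding remark, and the only thing to check is that ``the $M$ supplied by the proposition'' can be taken to be the original $M$ we started with. This identification is forced by the dimension argument above, so the proof amounts to little more than assembling these earlier statements.
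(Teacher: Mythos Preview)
Your argument is correct and follows the same approach as the paper, which simply notes that the corollary is obtained by combining Proposition~\ref{irred} with the preceding remark. You have merely unpacked these references in slightly more detail, including the dimension comparison that identifies the $M'$ from Proposition~\ref{irred} with the original $M$; this is exactly the content of the paper's proof of Proposition~\ref{irred}, so nothing new is needed.
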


\begin{rmk} \phantomsection\label{RemIrreducibleHomogeneous}
    From the above, every irreducible homogeneous semi-invariant is an associate of some $f_i$ with $1 \le i \le r$ or of some $h_\alpha$ with $\alpha \in \img(\phi)$.
\end{rmk}

\begin{cor}
\phantomsection\label{coprime}
    For a monomial pair $(p,q)$, the monomials $p$ and $q$ are coprime.
\end{cor}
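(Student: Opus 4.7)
The plan is to argue by contradiction using the family of semi-invariants $\{h_\alpha\}_{\alpha\in\mathbb{K}}$. Suppose $p$ and $q$ share a non-trivial common factor; since $\mathbb{K}[\mathcal{C}]$ is a UFD, there exists an irreducible $g\in\mathbb{K}[\mathcal{C}]$ dividing both. The key observation is then that $g$ divides every $h_\alpha=\alpha p-q$. I would combine this with the irreducibility of $h_\alpha$ established in Proposition \ref{irred} for $\alpha\in\img(\phi)$, which will force several of the $h_\alpha$ to coincide up to scalar and thereby produce a contradiction.

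The first step is to ensure that $\img(\phi)$ contains at least two distinct elements. By Corollary \ref{orbit}, the map $\phi$ is injective: if $\phi(M)=\phi(M')=\alpha$, then $\overline{\mathcal{O}(M)}=\mathcal{V}(h_\alpha)=\overline{\mathcal{O}(M')}$, which forces $M\cong M'$ (two orbits sharing a dense open inside an irreducible variety must coincide). Moreover, $U=\bigcup_{M\in\mathcal{I}}\mathcal{O}(M)$ is a non-empty open subset of $\mathcal{C}$ while each orbit $\mathcal{O}(M)$ has codimension one, so no finite union of such orbits can have full dimension; hence $\mathcal{I}$ and therefore $\img(\phi)$ are infinite.

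Next, I would pick any two distinct $\alpha,\alpha'\in\img(\phi)$. By Proposition \ref{irred}, both $h_\alpha$ and $h_{\alpha'}$ are irreducible in $\mathbb{K}[\mathcal{C}]$, and each is divisible by the irreducible element $g$. So $g$ and $h_\alpha$ are associates, which by Lemma \ref{units} means they differ by a scalar in $\mathbb{K}^*$; the same holds with $\alpha'$ in place of $\alpha$. It follows that $h_\alpha$ and $h_{\alpha'}$ are scalar multiples of each other, so $\mathcal{V}(h_\alpha)=\mathcal{V}(h_{\alpha'})$. But by Corollary \ref{orbit}, these vanishing sets are the orbit closures of two non-isomorphic representations in $\mathcal{I}$, hence distinct. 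This contradiction forces $p$ and $q$ to be coprime.

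The only real obstacle is the dimension-count showing $|\img(\phi)|\ge 2$ (equivalently $|\mathcal{I}|\ge 2$); once that is in hand, the remainder is a direct application of Proposition \ref{irred}, Corollary \ref{orbit}, and Lemma \ref{units}.
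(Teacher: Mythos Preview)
Your proof is correct but follows a genuinely different route from the paper's. The paper gives a two-line argument based on the minimality of $\chi$: since $p$ and $q$ are monomials in the $f_i$, any common irreducible factor must be some $f_j$; then $p/f_j$ and $q/f_j$ are two linearly independent monomials of the same weight strictly below $\chi$, contradicting the choice of $\chi$ as minimal with a two-dimensional weight space. Your argument instead never invokes minimality of $\chi$; it uses the geometry of the $h_\alpha$ established in Proposition~\ref{irred} and Corollary~\ref{orbit} to force any common factor to be simultaneously associate to two distinct irreducible $h_\alpha$, $h_{\alpha'}$, which is impossible. The paper's approach is shorter and exploits the monomial structure directly, while yours shows that coprimality already follows from the irreducibility of the $h_\alpha$ and the separation of orbit closures, without needing the minimality hypothesis at this particular step.
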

\begin{proof}
    Both $p$ and $q$ are monomials in the $f_i$.  If they are not coprime then
    there exists $f_j$ dividing both.  But then $p/f_j$ and $q/f_j$ would be
    linearly independent monomials of the same weight.  This violates the minimality of $\chi$.   
\end{proof}
\begin{prop}
\phantomsection\label{gen}
    The irreducible elements $f_1,f_2,\dotsc,f_r$ generate 
    $\SI(Q,\mathcal{C})$ as a $\K$-algebra.
\end{prop}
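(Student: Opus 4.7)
The plan is to reduce the claim to a statement about irreducible factors, then exploit the explicit description of the irreducible homogeneous semi-invariants obtained in Remark \ref{RemIrreducibleHomogeneous}. Since $\SI(Q,\mathcal{C})$ decomposes as a direct sum of weight spaces, every semi-invariant is a sum of homogeneous ones, so it is enough to show that each homogeneous $F \in \SI(Q,\mathcal{C})$ lies in the $\K$-subalgebra generated by $f_1,\ldots,f_r$.

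Fix a homogeneous semi-invariant $F$ of weight $\sigma$, and factor it in the UFD $\K[\mathcal{C}]$ as $F = c\, g_1 g_2 \cdots g_m$ with $c \in \K^*$ and $g_j$ irreducible. The first key step is to argue that each $g_j$ is itself a homogeneous semi-invariant. For this I would use that $\GL_\beta$ acts by $\K$-algebra automorphisms, so $g \cdot g_j$ is again irreducible; applying $g$ to the factorization of $F$ and using uniqueness of factorization, one gets that $\{g \cdot g_1,\ldots,g \cdot g_m\}$ agrees with $\{g_1,\ldots,g_m\}$ up to units and permutation. Since $\GL_\beta$ is connected, the induced permutation representation on the finite set of factors is trivial, so $g \cdot g_j = u_{g,j}\, g_j$ for some $u_{g,j} \in \K[\mathcal{C}]^\times$. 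By Lemma \ref{units} the units are in $\K^*$, hence $g \mapsto u_{g,j}$ is a character and each $g_j$ is homogeneous.

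By Remark \ref{RemIrreducibleHomogeneous}, each $g_j$ is a scalar multiple of some $f_i$ or of some $h_\alpha = \alpha p - q$. Since $p$ and $q$ are by definition monomials in $f_1,\ldots,f_r$, every $h_\alpha$ already lies in the $\K$-subalgebra $\K[f_1,\ldots,f_r]$. Consequently $F$, being (up to a scalar) a product of elements of $\K[f_1,\ldots,f_r]$, is itself in $\K[f_1,\ldots,f_r]$.

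The main obstacle I anticipate is the intermediate step verifying that the irreducible factors of a homogeneous semi-invariant are themselves homogeneous semi-invariants; the delicate point is ruling out a nontrivial permutation of the factors by $\GL_\beta$, which is where connectedness of $\GL_\beta$ (giving irreducibility as a variety and hence the triviality of any continuous action on a finite set) is essential. Once this is established, the rest of the argument is a direct application of the structural information already collected in Remark \ref{RemIrreducibleHomogeneous} and the definition of $h_\alpha$.
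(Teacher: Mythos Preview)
Your argument is correct and follows essentially the same route as the paper: both reduce to showing that every irreducible homogeneous semi-invariant is (up to a scalar) either one of the $f_i$ or some $h_\alpha = \alpha p - q$, and then observe that $h_\alpha \in \K[f_1,\dots,f_r]$ since $p,q$ are monomials in the $f_i$. The only difference is one of emphasis: you spell out explicitly, via connectedness of $\GL_\beta$ and Lemma~\ref{units}, why the irreducible factors of a homogeneous semi-invariant are themselves homogeneous, a step the paper leaves implicit.
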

\begin{proof}
    Let $h$ be a non-constant irreducible homogeneous semi-invariant. Consider the vanishing set $\mathcal{V}(h)$. Then, $\mathcal{V}(h)$ is an irreducible closed set of codimension one. Therefore, either $\mathcal{V}(h)=\mathcal{V}(f_i)$, for some $1\leq i \leq r$, or $\mathcal{V}(h)=\overline{\mathcal{O}(M)}=\mathcal{V}(h_\alpha)$, for some $M\in\mathcal{I}$. Using the fact that $h, f_i$ and $h_\alpha$, for $\alpha\in\img(\phi)$, are irreducible and hence prime (since $\mathbb{K}[\mathcal{C}]$ is a UFD), we conclude that in the former case, $h$ and $f_i$ are associates and in the latter case, $h$ and $h_\alpha$ are associates, for some $\alpha\in\img(\phi)$.  Thus either $h$ is a scalar multiple of some $f_i$ or is a linear combination of two monomials in the $f_i$. Therefore, we conclude $f_1,f_2,\dotsc,f_r$ generate $\SI(Q,\mathcal{C})$.
\end{proof}

\subsection{Weight Spaces and Relations}
In this section, we describe the structure of the non-trivial weight spaces and determine the algebraic relations that arise in different weight spaces.
\begin{prop}
\phantomsection\label{uniquechi}
    The minimal weight $\chi$ is the unique minimal weight, with respect to the partial ordering defined on the weights in $\Sigma(Q,\mathcal{C})$, such that $\dim_{\mathbb{K}}\SI(Q,\mathcal{C})_\chi\geq2$.
\end{prop}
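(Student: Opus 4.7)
The plan is to prove the uniqueness claim: assuming $\sigma'$ is a second minimal weight in $\Sigma(Q,\mathcal{C})$ with $\dim_{\mathbb{K}}\SI(Q,\mathcal{C})_{\sigma'}\geq 2$, I will show $\sigma'=\chi$. The key idea is to reproduce the monomial-pair and $h_\alpha$-construction at weight $\sigma'$ and extract from it an irreducible factor of weight $\chi$; this will give $\sigma'\succeq\chi$, and the minimality of $\sigma'$ will then collapse this inequality to an equality.

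Since $\SI(Q,\mathcal{C})$ is generated as a $\mathbb{K}$-algebra by the homogeneous elements $f_1,\dots,f_r$ (Proposition \ref{gen}), the weight space $\SI(Q,\mathcal{C})_{\sigma'}$ is spanned by monomials in the $f_i$ of weight $\sigma'$, so I pick two linearly independent such monomials $P$ and $Q$. The argument of Corollary \ref{coprime} adapts: any non-trivial common factor $g$ of $P$ and $Q$ is a submonomial in the $f_i$ of some weight $\tau$, and dividing $P$ and $Q$ by $g$ produces two linearly independent monomials of weight $\sigma'-\tau$. Since the unique closed orbit $\{0\}$ lies in every orbit closure, every invariant is constant (as in Lemma \ref{units}), giving $\SI(Q,\mathcal{C})_0=\mathbb{K}$ and in particular $\tau\ne 0$; hence $\sigma'-\tau$ is a strictly smaller weight violating the minimality of $\sigma'$. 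Thus $P$ and $Q$ are coprime.

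Next, exactly as in Lemma \ref{p,q}, neither $P$ nor $Q$ vanishes on any $M\in\mathcal{I}$. Fixing such an $M$ and setting $\alpha := Q(M)/P(M)\in\mathbb{K}^*$, the element $H := \alpha P - Q$ is a non-zero semi-invariant of weight $\sigma'$ vanishing on $\overline{\mathcal{O}(M)}$. By Corollary \ref{orbit}, $\overline{\mathcal{O}(M)}=\mathcal{V}(h_\beta)$ for some irreducible $h_\beta\in\SI(Q,\mathcal{C})_\chi$; since $\mathbb{K}[\mathcal{C}]$ is a UFD, $h_\beta$ is prime, and the Nullstellensatz yields $h_\beta\mid H$. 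This divisibility immediately gives $\sigma'\succeq\chi$.

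To conclude, I verify that $\succeq$ is antisymmetric, so that $\sigma'\succeq\chi$ combined with $\sigma'\ne\chi$ would place $\chi$ strictly below $\sigma'$, and since $\dim_{\mathbb{K}}\SI(Q,\mathcal{C})_\chi\geq 2$, this would contradict the minimality of $\sigma'$. Antisymmetry follows from $\SI(Q,\mathcal{C})_0=\mathbb{K}$: if both $\sigma'\succeq\chi$ and $\chi\succeq\sigma'$ hold, one obtains non-zero semi-invariants in weights $\sigma'-\chi$ and $\chi-\sigma'$ whose product is a non-zero element of $\SI(Q,\mathcal{C})_0=\mathbb{K}^*$, forcing both factors to be units and thus constants by Lemma \ref{units}; hence $\sigma'=\chi$. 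I expect this antisymmetry verification, and its reliance on $\SI(Q,\mathcal{C})_0=\mathbb{K}$, to be the most delicate point of the argument; the extraction of the $h_\beta$ factor is a direct reuse of the machinery already set up for $\chi$.
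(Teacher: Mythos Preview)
Your proof is correct and follows the paper's approach: set up a monomial pair at the second minimal weight, build the analogue of $h_\alpha$, and link it to the weight-$\chi$ irreducibles through an orbit closure $\overline{\mathcal{O}(M)}$. The only cosmetic difference is in the endgame: the paper repeats the argument of Proposition~\ref{irred} to show that $h'_{\alpha'}$ is itself irreducible and hence an associate of $h_\alpha$, giving $\chi'=\chi$ immediately, whereas you only extract $h_\beta$ as a factor of $H$ to obtain $\sigma'\succeq\chi$ and then close with minimality and the antisymmetry of $\succeq$.
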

\begin{proof}
    Let $\chi'$ be another such minimal weight such that $\dim_{\mathbb{K}}\SI(Q,\mathcal{C})_{\chi'}\geq2$. Choose a monomial pair $(p',q')$ of weight $\chi'$.
    For $\alpha'\in\mathbb{K}$, we define,
    \[
        h'_{\alpha'}=\alpha' p'- q'
    \]
    and a map
    \[
        \begin{array}{cccc}
        \phi': & \mathcal{I} & \longrightarrow & \mathbb{K}^*\\
        \textbf{ } & M & \longmapsto & \alpha'=\frac{q'(M)}{p'(M)}
        \end{array}
    \]
    Arguing as above, we see that if $\alpha'\in\img(\phi')$, then $h'_{\alpha'}$ is irreducible and the vanishing set $\mathcal{V}(h'_{\alpha'})=\overline{\mathcal{O}(M)}$, for some $M\in\mathcal{I}$.
    
    Using Corollary \ref{orbit}, we get,
    $\mathcal{V}(h'_{\alpha'})=\mathcal{V}(h_\alpha)$, for some $\alpha\in\img(\phi)$.
    Therefore, by using the fact that $\mathbb{K}[\mathcal{C}]$ is a UFD and the fact that $h_\alpha$ and $h'_{\alpha'}$ are irreducible, for $\alpha\in\img(\phi)$ and $\alpha'\in\img(\phi')$, respectively, and hence prime, we conclude that $h_\alpha$ and $h'_{\alpha'}$ are associates and hence, $\chi'=\chi$. 
\end{proof}
In fact, we will prove that $\dim_{\mathbb{K}}\SI(Q,\mathcal{C})_\chi=2$, but first we will need some technical lemmas. The first lemma is a well-known observation. \par
\begin{lem}
\phantomsection\label{monoprod}
 Let $Z\in\mathbb{K}[\mathcal{C}]$ with vanishing set
    \[
        \mathcal{V}(Z)=\bigcup_{i=1}^{m}\mathcal{V}(Z_{i})
    \]
    where the $Z_{i}\in\mathbb{K}[\mathcal{C}]$ are non-associate primes. Then $Z=\prod_{i=1}^{m}Z_i^{a_{i}}$, where $a_{i}\in\mathbb{Z}_{>0}$.
\end{lem}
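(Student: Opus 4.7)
The strategy is to translate the given equality of vanishing sets into an equality of radical ideals via the Nullstellensatz, and then exploit the UFD structure of $\mathbb{K}[\mathcal{C}]$ to read off the prime factorization of $Z$.

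First I would apply Hilbert's Nullstellensatz. Taking the ideal of the given vanishing set gives
\[
    \sqrt{(Z)} = \mathcal{I}\bigl(\mathcal{V}(Z)\bigr) = \mathcal{I}\Bigl(\bigcup_{i=1}^m \mathcal{V}(Z_i)\Bigr) = \bigcap_{i=1}^m \mathcal{I}(\mathcal{V}(Z_i)) = \bigcap_{i=1}^m (Z_i),
\]
where the last equality uses that each $(Z_i)$ is a prime, and hence radical, ideal.

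Next I would use the assumption that the $Z_i$ are pairwise non-associate primes in the UFD $\mathbb{K}[\mathcal{C}]$, so they are pairwise coprime. A standard UFD argument then shows that the intersection of the principal ideals they generate equals the principal ideal generated by their product:
\[
    \bigcap_{i=1}^m (Z_i) = (Z_1 Z_2 \cdots Z_m).
\]
Combining these gives $\sqrt{(Z)} = (Z_1 Z_2 \cdots Z_m)$.

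From this equality of radicals I would conclude the multiplicative claim as follows. On the one hand, each $Z_i$ lies in $\sqrt{(Z)}$, so some power of $Z_i$ is divisible by $Z$; since $Z_i$ is prime, this forces $Z_i \mid Z$. On the other hand, any irreducible factor $p$ of $Z$ lies in $\sqrt{(Z)} = (Z_1 \cdots Z_m)$, so $p$ divides $Z_1 \cdots Z_m$, and by primality of the $Z_i$ and uniqueness of factorization, $p$ is an associate of exactly one of the $Z_i$. Unique factorization in $\mathbb{K}[\mathcal{C}]$ therefore yields $Z = u\, Z_1^{a_1}\cdots Z_m^{a_m}$ for some unit $u \in \mathbb{K}[\mathcal{C}]$ and strictly positive integers $a_i$ (strictness coming from the fact that each $\mathcal{V}(Z_i)$ is an actual component of $\mathcal{V}(Z)$). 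By Lemma~\ref{units} the unit $u$ is a scalar, which can be absorbed into the prime $Z_1$ without affecting its primality or its vanishing set, giving the desired expression $Z = \prod_{i=1}^m Z_i^{a_i}$.

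There is no substantive obstacle: the only mildly delicate point is verifying that the $a_i$ are all strictly positive, which follows because otherwise $\mathcal{V}(Z_i)$ would not appear as an irreducible component of $\mathcal{V}(Z)$, contradicting the hypothesis. Everything else is a direct application of the Nullstellensatz together with standard properties of UFDs, which is why the authors describe the lemma as "a well-known observation."
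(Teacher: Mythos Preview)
The paper does not prove this lemma---it is introduced as ``a well-known observation'' with no argument given---so your proof stands on its own rather than as a comparison. Your overall strategy (Nullstellensatz plus unique factorization) is the right one, and the conclusion is correct, but the third paragraph contains two directional slips that make the argument as written invalid.

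You assert that ``each $Z_i$ lies in $\sqrt{(Z)}$'', but you have just computed $\sqrt{(Z)} = (Z_1\cdots Z_m)$, and $Z_i$ belongs to this ideal only when $m=1$. The correct observation is the reverse containment: $Z \in (Z) \subseteq \sqrt{(Z)} = \bigcap_i (Z_i) \subseteq (Z_i)$, which immediately gives $Z_i \mid Z$ for every $i$. Likewise, the claim that ``any irreducible factor $p$ of $Z$ lies in $\sqrt{(Z)}$'' fails whenever $Z$ has at least two non-associate prime factors (for instance if $Z = pq$ with $p,q$ non-associate primes, then no power of $p$ is divisible by $Z$). The clean replacement is: since $Z_1\cdots Z_m \in \sqrt{(Z)}$, some power $(Z_1\cdots Z_m)^N$ is divisible by $Z$, so every prime factor of $Z$ divides $(Z_1\cdots Z_m)^N$ and hence is associate to one of the $Z_i$. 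With these two corrections your argument goes through verbatim, including the appeal to Lemma~\ref{units} to absorb the unit.
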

For the rest of this section we fix a monomial pair $(p,q)$ of weight $\chi$. For any weight $\sigma\in\Sigma(Q,\mathcal{C})$, we can write $\sigma$ as $\sigma=n\chi+\delta$, for some maximal $n\in\mathbb{Z}_{\geq0}$ and for some weight $\delta$, and $\sigma\succeq n\chi$. We will refer to $\delta$ as the \emph{remainder} of $\sigma$ and denote it as $\rem{\sigma}$. It follows from Proposition \ref{uniquechi} that  $\dim_{\mathbb{K}}\SI(Q,\mathcal{C})_{\rem{\sigma}}\leq1$.

\begin{lem}
\phantomsection\label{findgamma}
    Let $\sigma\in\Sigma(Q,\mathcal{C})$ be a weight and write
    $\sigma = n\chi + \rem{\sigma}$. Suppose $\SI(Q,\mathcal{C})_{\rem{\sigma}}$ is generated by the monomial $X$. Observe that $\lbrace p^iq^jX\rbrace_{i+j=n}$ are distinct monomials in the $f_i$ in $\SI(Q,\mathcal{C})_{\sigma}$. Assume that $W$ is another monomial in the $f_i$ with $W \in \SI(Q,\mathcal{C})_{\sigma}$ that is not an associate to any of the elements in $\lbrace p^iq^jX\rbrace_{i+j=n}$. For $\mu\in\mathbb{K}^*$, define
    \[
        g_\mu\coloneqq \mu p^nX - W.
    \]
    Then, there exists $\gamma\in\mathbb{K}^*$ such that 
    $h_\alpha$ divides $g_\gamma$
    for some $\alpha\in\img(\phi)$.
\end{lem}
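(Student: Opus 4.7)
The plan is to produce the desired $\gamma$ by evaluation at a chosen representative $M\in\mathcal{I}$ and then use the semi-invariance of $g_\mu$ together with the fact that $h_\alpha$ is prime in $\mathbb{K}[\mathcal{C}]$ to upgrade a pointwise vanishing statement to a divisibility statement.

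The key preliminary observation I would record is that any monomial $F$ in $f_1,\dots,f_r$ is non-vanishing at every $M\in\mathcal{I}$. Indeed, by construction, each $\mathcal{V}(f_i)$ is an irreducible component of $U^{\mathsf{c}}$, while $M\in\mathcal{O}(M)\subseteq U$; therefore $f_i(M)\ne 0$ for $i=1,\dots,r$, and hence $F(M)\ne 0$. In particular, $p(M)$, $q(M)$, $X(M)$ and $W(M)$ are all non-zero scalars, since $p,q,X,W$ are all monomials in the $f_i$.

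Given this, for any fixed $M\in\mathcal{I}$, I would set
\[
    \gamma \coloneqq \frac{W(M)}{p^{n}(M)\,X(M)} \in \mathbb{K}^{*}.
\]
This is well defined and non-zero by the preceding step, and by construction $g_\gamma(M)=\gamma p^n(M)X(M)-W(M)=0$. Moreover, since $W$ is assumed not to be an associate of $p^{n}X$, the semi-invariant $g_\gamma$ is non-zero in $\mathbb{K}[\mathcal{C}]$.

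Now $g_\gamma\in\SI(Q,\mathcal{C})_\sigma$, so for every $g\in\GL_\beta$ we have $g_\gamma(gM)=\chi_\sigma(g^{-1})\,g_\gamma(M)=0$. Hence $g_\gamma$ vanishes on the orbit $\mathcal{O}(M)$, and therefore on its closure $\overline{\mathcal{O}(M)}$. By Corollary~\ref{orbit}, $\overline{\mathcal{O}(M)}=\mathcal{V}(h_\alpha)$ with $\alpha=\phi(M)=q(M)/p(M)\in\img(\phi)$. Because $h_\alpha$ is irreducible in the UFD $\mathbb{K}[\mathcal{C}]$ (Proposition~\ref{irred}), the ideal $(h_\alpha)$ is prime, and in particular equal to its own radical. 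Hilbert's Nullstellensatz then yields $g_\gamma\in\sqrt{(h_\alpha)}=(h_\alpha)$, i.e. $h_\alpha\mid g_\gamma$, which is the required divisibility.

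The only genuine point needing care is the first preliminary observation that monomials in the $f_i$ do not vanish at points of $\mathcal{I}$; everything else is a direct packaging of the semi-invariance of $g_\gamma$ with the prime/UFD machinery already set up in Proposition~\ref{irred} and Corollary~\ref{orbit}. So I do not expect a serious obstacle beyond being careful that $\gamma\in\mathbb{K}^{*}$ and that $g_\gamma\ne 0$, both of which are automatic once $W$ is not assumed to be an associate of $p^{n}X$.
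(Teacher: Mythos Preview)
Your proof is correct and is in fact cleaner than the paper's, but the route is genuinely different.

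The paper argues by contradiction and pigeonhole: invoking Remark~\ref{RemIrreducibleHomogeneous}, it supposes that for every $\mu\in\mathbb{K}^*$ all irreducible factors of $g_\mu$ lie among the $f_i$, so each $g_\mu$ is (up to scalar) a monomial in the $f_i$; since there are infinitely many $\mu$ and only finitely many such monomials of weight $\sigma$, two distinct $\mu_1,\mu_2$ yield scalar multiples of the same monomial, and subtracting forces $W$ to be an associate of $p^nX$, a contradiction. Your argument, by contrast, is constructive: you pick $M\in\mathcal{I}$, exploit the observation (essentially Lemma~\ref{p,q}) that monomials in the $f_i$ do not vanish on $U$ to define $\gamma=W(M)/(p^n(M)X(M))\in\mathbb{K}^*$, and then push the pointwise vanishing $g_\gamma(M)=0$ through semi-invariance, Corollary~\ref{orbit}, and the primality of $h_\alpha$ to get $h_\alpha\mid g_\gamma$. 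Your approach gives an explicit $\gamma$ and $\alpha=\phi(M)$ and avoids the pigeonhole step entirely; the paper's approach avoids the explicit appeal to the Nullstellensatz but is less informative about which $\gamma$ works. Both are short, and your version arguably integrates more smoothly with the surrounding machinery (Proposition~\ref{irred}, Corollary~\ref{orbit}) already in place.
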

\begin{proof}
    By Remark \ref{RemIrreducibleHomogeneous}, it suffices to find an element $\gamma\in\mathbb{K}^*$ such that there exists an irreducible factor $z$ of $g_{\gamma}$ such that $z$ is not an associate of any $f_{i}$.
    
    If not, then there exists a monomial in the $f_i$, say $f_{i_1}^{s_1}f_{i_{2}}^{s_2}\dotsb f_{i_{t}}^{s_{t}}$ where $i_{k}\in\lbrace1,2,\dotsc,r\rbrace$, which is an associate of $g_{\mu}$ for infinitely many $\mu\in\mathbb{K}^*$. Now choose two such elements $\mu_1,\mu_2\in\mathbb{K}^*$. Therefore, we have
    \[
        \mu_1 p^nX-W=\lambda_1 f_{i_1}^{s_1}f_{i_{2}}^{s_2}\dotsb f_{i_{t}}^{s_{t}}\;\text{and}\;\mu_2 p^nX-W=\lambda_2  f_{i_1}^{s_1}f_{i_{2}}^{s_2}\dotsb f_{i_{t}}^{s_{t}}
    \]
    After multiplying the first equation by $\lambda_2$ and the second equation by $\lambda_1$, and subtracting them, we get 
    $(\lambda_1-\lambda_2)W = (\lambda_1 \mu_2 - \lambda_2\mu_1)p^nX$
    showing that $W$ and $p^n X$ are associates, which contradicts our assumption.
\end{proof}
In the next proposition we calculate the dimension of the weight space $\SI(Q,\mathcal{C})_{\chi}$.
\begin{prop}
\phantomsection\label{2-dim}
    The weight space $\SI(Q,\mathcal{C})_{\chi}$ is $2$ dimensional.
\end{prop}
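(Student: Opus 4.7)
The plan is to prove the reverse inequality $\dim_{\mathbb{K}} \SI(Q,\mathcal{C})_\chi \le 2$; combined with Lemma~\ref{2dimweight} and the minimality of $\chi$ (which together give $\ge 2$), this yields equality. I argue by contradiction, assuming $\dim_{\mathbb{K}} \SI(Q,\mathcal{C})_\chi \geq 3$.

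By Proposition~\ref{gen}, $\SI(Q,\mathcal{C})$ is generated as a $\mathbb{K}$-algebra by $f_1, \ldots, f_r$, so the weight space $\SI(Q,\mathcal{C})_\chi$ is $\mathbb{K}$-spanned by the monomials in the $f_i$ of weight $\chi$. Under the contrary assumption one can therefore select a monomial $W$ in the $f_i$ of weight $\chi$ with $W \notin \mathrm{span}_{\mathbb{K}}(p,q)$; and because units in $\mathbb{K}[\mathcal{C}]$ are scalars by Lemma~\ref{units}, such a $W$ is not an associate of either $p$ or $q$. I then invoke Lemma~\ref{findgamma} with $\sigma = \chi$, $n = 1$ and $X = 1$ (valid because $\rem{\chi} = 0$ and the weight-zero space is spanned by the empty monomial $1$): the set $\{p^i q^j X\}_{i+j=1}$ is precisely $\{p,q\}$, so the lemma supplies $\gamma \in \mathbb{K}^*$ and $\alpha \in \img(\phi)$ with $h_\alpha \mid g_\gamma := \gamma p - W$ in $\mathbb{K}[\mathcal{C}]$.

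Writing $g_\gamma = h_\alpha \cdot u$, the $\GL_\beta$-weight comparison forces $u$ to be a homogeneous semi-invariant of weight $\chi - \chi = 0$, i.e.\ a $\GL_\beta$-invariant. The origin $0 \in \mathcal{C}$ lies in the closure of every $\GL_\beta$-orbit (the admissibility of $R$ renders every representation nilpotent, so one can contract any $V$ to $0$ via a suitable one-parameter subgroup of scalar matrices $(t^{n_x}I)_{x \in Q_0}$; this is the same structural fact that powers the $x=0$ substitution in Lemma~\ref{units}), so every $\GL_\beta$-invariant regular function on $\mathcal{C}$ is constant. Hence $u = c \in \mathbb{K}^*$, and rearranging $\gamma p - W = c(\alpha p - q)$ yields $W = (\gamma - c\alpha)p + cq \in \mathrm{span}_{\mathbb{K}}(p,q)$, contradicting the choice of $W$.

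The main subtlety is the final paragraph: one must justify that the weight-zero semi-invariant $u$ in the factorisation $g_\gamma = h_\alpha u$ is a scalar. Everything else is mechanical once Lemma~\ref{findgamma} has been applied in the right way (with $X = 1$ to get a divisibility statement for $\gamma p - W$ itself), so the substantive content is the fact that $\GL_\beta$-invariants on $\mathcal{C}$ collapse to $\mathbb{K}$, which reduces the divisibility $h_\alpha \mid g_\gamma$ to a linear relation between $g_\gamma$ and $h_\alpha$.
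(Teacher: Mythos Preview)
Your argument is correct and follows essentially the same route as the paper's own proof: both pick an extra monomial $W$ of weight $\chi$, invoke Lemma~\ref{findgamma} to obtain $h_\alpha \mid g_\gamma = \gamma p - W$, and then use that the quotient has trivial weight to force $W \in \mathrm{span}_{\mathbb{K}}(p,q)$. The only difference is cosmetic: you phrase it as a contradiction and spell out why weight-zero elements are scalars (via the unique closed orbit at $0$), whereas the paper argues directly and compresses that step into the single clause ``same weight, hence associates.''
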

\begin{proof}
    Let $\lbrace p,q,X_1,X_2,\dotsc,X_t\rbrace$ be all the monomials in the $f_i$ contained in the weight space $\SI(Q,\mathcal{C})_{\chi}$.
    
    Fix a monomial $X_i$, where $i\in\lbrace1,2,\dotsc,t\rbrace$. For $\mu\in\mathbb{K}^*$, define
    \[
        g_{\mu}\coloneqq\mu p-X_i.
    \]
    By Lemma \ref{findgamma}, we know that there exists $\gamma\in\mathbb{K}^*$
    and $\alpha\in\img(\phi)$ such that $h_{\alpha}$ divides $g_{\gamma}$. Since $g_{\gamma}$ and $h_\alpha$ are semi-invariants of the same weight $\chi$, it follows that $g_{\gamma}$ and $h_{\alpha}$ are associates. Recall that $h_\alpha$ is a linear combination of $p$ and $q$. Hence, it follows that $X_i$ can be written as a linear combination of $p$ and $q$.
    By definition, $p$ and $q$ are linearly independent. The statement follows.
\end{proof}
\begin{lem}
\phantomsection\label{algind}
    The monomials $p$ and $q$ are algebraically independent.
\end{lem}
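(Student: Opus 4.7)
The plan is to argue by contradiction. Suppose $p$ and $q$ are algebraically dependent, so there is a nonzero polynomial $F(X,Y)\in\mathbb{K}[X,Y]$ with $F(p,q)=0$. Write $F=\sum_{i,j}c_{ij}X^{i}Y^{j}$. Since $p$ and $q$ both have weight $\chi$, the monomial $p^{i}q^{j}$ lies in $\SI(Q,\mathcal{C})_{(i+j)\chi}$. Because the weight space decomposition of $\SI(Q,\mathcal{C})$ is a direct sum, the relation $\sum c_{ij}p^{i}q^{j}=0$ breaks up into separate relations on each weight space: for every $n$, we have $\sum_{i+j=n}c_{ij}p^{i}q^{j}=0$. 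Since $F\ne 0$, we may pick some $n\geq 1$ for which the corresponding homogeneous piece $F_{n}(X,Y)=\sum_{i+j=n}c_{ij}X^{i}Y^{j}$ is nonzero, and replace $F$ with $F_{n}$. Thus we may assume $F$ is a nonzero homogeneous polynomial of some degree $n\geq 1$.

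Next I would use that $\mathbb{K}$ is algebraically closed: any nonzero homogeneous polynomial in two variables over $\mathbb{K}$ factors completely into linear forms, so
\[
F(X,Y)=c\prod_{k=1}^{n}(a_{k}X+b_{k}Y)
\]
for some $c\in\mathbb{K}^{*}$ and some $(a_{k},b_{k})\in\mathbb{K}^{2}\setminus\{(0,0)\}$. Evaluating at $(p,q)$ we obtain $c\prod_{k=1}^{n}(a_{k}p+b_{k}q)=0$ inside $\mathbb{K}[\mathcal{C}]$. Since $\mathbb{K}[\mathcal{C}]$ is a UFD (in particular an integral domain), at least one factor vanishes: there is some $k$ with $a_{k}p+b_{k}q=0$. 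But $p$ and $q$ are nonzero (they are monomials in the $f_{i}$), so if $a_{k}=0$ then $b_{k}=0$ as well, contradicting $(a_{k},b_{k})\ne(0,0)$, and symmetrically for $b_{k}=0$. Hence $a_{k},b_{k}\in\mathbb{K}^{*}$ and we obtain a nontrivial linear dependence between $p$ and $q$, contradicting the fact that $(p,q)$ is a monomial pair (whose defining property is linear independence).

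I do not expect any real obstacle here: the whole argument rests on the two facts that $\mathbb{K}$ is algebraically closed (so homogeneous binary forms split) and that $\mathbb{K}[\mathcal{C}]$ is an integral domain. The only point that deserves a moment of care is the reduction to a homogeneous relation, which uses the weight space grading and the observation that $p$ and $q$ carry the \emph{same} weight $\chi$ (so that mixed monomials $p^{i}q^{j}$ are sorted by total degree $i+j$).
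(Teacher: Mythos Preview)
Your proof is correct. Both you and the paper begin with the same reduction to a homogeneous relation (using that $p$ and $q$ have the same weight $\chi$), but you finish differently. The paper observes that in a minimal homogeneous relation the extreme coefficients of $p^{m}$ and $q^{m}$ may be taken nonzero, and then notes that every term except $q^{m}$ is divisible by $p$; this forces $p\mid q^{m}$, contradicting Corollary~\ref{coprime} (coprimality of $p$ and $q$, which in turn rests on the minimality of $\chi$). You instead factor the homogeneous binary form over the algebraically closed field $\mathbb{K}$, use that $\mathbb{K}[\mathcal{C}]$ is an integral domain to isolate a single linear factor $a_{k}p+b_{k}q=0$, and contradict the linear independence built into the definition of a monomial pair.

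Your route is slightly more elementary: it needs only that $\mathbb{K}[\mathcal{C}]$ is a domain and $\mathbb{K}$ is algebraically closed, and never invokes the coprimality of $p,q$ (hence not the minimality of $\chi$ or the UFD hypothesis). In effect you are proving the general fact that two linearly independent elements of the same weight in a weight-graded integral domain over an algebraically closed field are automatically algebraically independent. The paper's argument is a line shorter but leans on the specific structure already established.
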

\begin{proof}
    Assume that there is some algebraic dependence relation among $p$ and $q$, which we may assume to be homogeneous. Let this relation be
    \[
        c_p p^m+\sum_{i+j=m}c_{ij}p^iq^j+c_q q^m=0,
    \]
    where $c_p,c_q\in\mathbb{K}^*$, $c_{ij} \in \K$ and $l,i,j,m\in\mathbb{Z}_{\geq0}$. Then we get,
    \[
        c_pp^m+\sum_{i,j}c_{ij}p^iq^j=-c_qq^m
    \]
    which contradicts Corollary \ref{coprime}.
\end{proof}

\begin{prop}
\phantomsection\label{basisnchi}
    Let $\sigma\in\Sigma(Q,\mathcal{C})$ be a weight and write $\sigma=n\chi+\rem{\sigma}$. Then, 
    \[          
        \dim_{\mathbb{K}}\,\SI(Q,\mathcal{C})_{\sigma}=\dim_{\mathbb{K}}\,\SI(Q,\mathcal{C})_{n \chi} = n+1.
    \]
   Moreover, $\lbrace p^iq^jX\rbrace_{i+j=n}$ where $X\in\SI(Q,\mathcal{C})_{\rem{\sigma}}$, forms a basis of $\SI(Q,\mathcal{C})_{\sigma}$.
\end{prop}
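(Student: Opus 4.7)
The plan is to induct on $n$, using Lemma~\ref{findgamma} to pass from weight $\sigma$ to weight $\sigma - \chi$. The alignment of these two weights is crucial: writing $\sigma = n\chi + \rem{\sigma}$ with $n$ maximal forces $\sigma - \chi = (n-1)\chi + \rem{\sigma}$ with $n-1$ again maximal, so both weights share the same remainder and the same monomial $X$ spanning $\SI(Q, \mathcal{C})_{\rem{\sigma}}$.

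For the base case $n = 0$, the weight $\sigma$ equals $\rem{\sigma}$. Since $\sigma \in \Sigma(Q, \mathcal{C})$, the weight space is nonzero, and by the discussion preceding Lemma~\ref{findgamma} it is at most one-dimensional, so $\dim_{\K} \SI(Q, \mathcal{C})_\sigma = 1$. Proposition~\ref{gen} then ensures this line is spanned by some monomial $X$ in the $f_i$, giving the required basis $\{X\}$.

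For the inductive step, linear independence of $\{p^iq^jX\}_{i+j=n}$ follows at once from the algebraic independence of $p$ and $q$ (Lemma~\ref{algind}) together with $X \neq 0$ in the integral domain $\K[\mathcal{C}]$. To show spanning, Proposition~\ref{gen} reduces the problem to showing that an arbitrary monomial $W$ in the $f_i$ of weight $\sigma$ lies in the span. If $W$ is a scalar multiple of some $p^iq^jX$ there is nothing to do; otherwise Lemma~\ref{findgamma} supplies $\gamma \in \K^*$ and $\alpha \in \img(\phi)$ such that $h_\alpha$ divides $g_\gamma = \gamma p^n X - W$. Writing $g_\gamma = h_\alpha Y$, the factor $Y$ is a semi-invariant of weight $\sigma - \chi = (n-1)\chi + \rem{\sigma}$, and by the inductive hypothesis lies in the span of $\{p^iq^jX\}_{i+j=n-1}$. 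Expanding $h_\alpha Y = (\alpha p - q) Y$ then exhibits $g_\gamma$, and hence $W = \gamma p^n X - g_\gamma$, as a $\K$-linear combination of $\{p^{i'}q^{j'}X\}_{i'+j'=n}$.

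The equality $\dim_{\K} \SI(Q,\mathcal{C})_{n\chi} = n+1$ is the special case $\rem{\sigma} = 0$ with $X = 1$. The main technical point is tracking the remainder: one must verify $\rem{\sigma - \chi} = \rem{\sigma}$ (immediate from the maximality of $n$ in the decomposition of $\sigma$) so that the inductive hypothesis supplies a basis of $\SI(Q,\mathcal{C})_{\sigma-\chi}$ involving the very same monomial $X$ that appears in $\SI(Q,\mathcal{C})_\sigma$, and that the monomial $X$ can be chosen uniformly, which is possible since $\dim_\K \SI(Q,\mathcal{C})_{\rem{\sigma}} \le 1$ forces any two such monomials to be associates.
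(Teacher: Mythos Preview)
Your proof is correct and follows essentially the same approach as the paper: both arguments use Lemma~\ref{findgamma} to produce a factorization $g_\gamma = h_\alpha \cdot (\text{element of weight } \sigma-\chi)$ and then invoke the result for $\sigma-\chi$, with the paper phrasing this as a minimal-counterexample argument while you use direct induction on $n$. Your version is slightly more explicit about the base case, the linear independence via Lemma~\ref{algind}, and the verification that $\rem{\sigma-\chi}=\rem{\sigma}$, but the core idea is identical.
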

\begin{proof}  
    Let the one dimensional space $\SI(Q,\mathcal{C})_{\rem{\sigma}}$ be generated by $X$. Assume that the statement of the proposition does not hold, and let $n$ be minimal such that $\dim_{\mathbb{K}}\,\SI(Q,\mathcal{C})_{\sigma} > n+1$. Clearly, $n > 0$.

    Observe that $\lbrace p^jq^{n-j}X \mid 0 \le j \le n\rbrace$ are linearly independent monomials in the $f_i$ contained in $\SI(Q,\mathcal{C})_{\sigma}$. Let $W$ be another monomial in the $f_i$ in $\SI(Q,\mathcal{C})_{\sigma}$. For $\mu\in\mathbb{K}^*$, define
    \[
        g_\mu\coloneqq\mu p^nX-W.
    \]
    Using Lemma \ref{findgamma}, we can find some $\gamma\in\mathbb{K}^*$ such that $g_{\gamma}=h_{\alpha}X'$, for some $\alpha\in\img(\phi)$ and $X'\in\SI(Q,\mathcal{C})_{(n-1)\chi + \rem{\sigma}}$. By minimality of $n$, it follows that $X'$ is a linear combination of the $\{p^iq^{n-1-i}X\}$. This, together with the fact that $h_\alpha$ is a linear combination of $p,q$ yields that $W$ is a linear combination of the $\lbrace p^iq^{n-i}X\rbrace$. This proves the proposition.
\end{proof}
\begin{prop}
\phantomsection\label{TrinomialRelation}
    Suppose the map $\phi$ is not surjective. If $\gamma\in\mathbb{K}^*\setminus\img(\phi)$, then $h_\gamma$ gives rise to a non-trivial linear dependence relation between three distinct pairwise coprime monomials in the $f_i$ of weight $\chi$.
\end{prop}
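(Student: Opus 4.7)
The plan is to leverage the hypothesis $\gamma \notin \img(\phi)$ to force $h_\gamma = \gamma p - q$ to factor as a scalar multiple of a single monomial in the $f_i$, which immediately produces the desired trinomial relation. First I would observe that $h_\gamma(M) = 0$ for some $M \in \mathcal{I}$ would force $\gamma = q(M)/p(M) = \phi(M)$, contradicting the choice of $\gamma$. Hence $h_\gamma$ does not vanish on any $\overline{\mathcal{O}(M)}$ for $M \in \mathcal{I}$.

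Next I would decompose $h_\gamma$ into irreducibles in the UFD $\mathbb{K}[\mathcal{C}]$. Each irreducible factor $z$ has $\mathcal{V}(z)$ an irreducible codimension-one component of $\mathcal{V}(h_\gamma)$; since $\GL_\beta$ is connected it permutes these components trivially, so $\mathcal{V}(z)$ is $\GL_\beta$-stable and Proposition~\ref{genSI} shows $z$ is (up to scalar) a homogeneous semi-invariant. By Remark~\ref{RemIrreducibleHomogeneous}, $z$ is either an associate of some $f_j$ or of some $h_\alpha$ with $\alpha \in \img(\phi)$. The latter case is excluded by the first step: if $h_\alpha \mid h_\gamma$, Corollary~\ref{orbit} would give $\overline{\mathcal{O}(M)} = \mathcal{V}(h_\alpha) \subseteq \mathcal{V}(h_\gamma)$ for some $M \in \mathcal{I}$, contradicting the vanishing observation. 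Therefore every irreducible factor of $h_\gamma$ is an associate of some $f_j$, and we obtain $h_\gamma = c\,W$ for some $c \in \mathbb{K}^*$ and some monomial $W$ in the $f_i$ of weight $\chi$, yielding the dependence relation
\[
    \gamma p - q - cW = 0.
\]

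The final step is to check that the three monomials $p, q, W$ are distinct and pairwise coprime. Distinctness of $p$ and $q$ and their coprimality follow from the definition of a monomial pair and from Corollary~\ref{coprime}; if $W$ were a scalar multiple of $p$ (resp.~$q$), the relation would force $q$ (resp.~$p$) to be a scalar multiple of $p$ (resp.~$q$), contradicting their linear independence. For pairwise coprimality of $W$ with $p$ (and symmetrically with $q$), any common $f_j$-factor of $p$ and $W$ would, via the relation, also divide $q$, again contradicting Corollary~\ref{coprime}. I do not anticipate a serious obstacle: the entire argument hinges on the single conceptual point that the condition $\gamma \notin \img(\phi)$ is precisely what rules out every $h_\alpha$ as an irreducible factor of $h_\gamma$, leaving only monomials in the $f_i$.
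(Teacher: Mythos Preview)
Your proof is correct and follows essentially the same route as the paper's: classify the irreducible factors of $h_\gamma$ via Remark~\ref{RemIrreducibleHomogeneous}, rule out any $h_\alpha$ with $\alpha\in\img(\phi)$, and conclude that $h_\gamma$ is a scalar times a monomial $W$ in the $f_i$, giving the trinomial relation. The only cosmetic difference is that you exclude the $h_\alpha$ case by a vanishing argument on orbits, whereas the paper notes that $h_\alpha\mid h_\gamma$ forces them to be associates (same weight) and hence yields a linear dependence between $p$ and $q$; your verification of pairwise coprimality is also spelled out more fully than in the paper.
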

\begin{proof}
Assume first that $h_\gamma$ is divisible by some $h_\alpha$ where $\alpha \in \img{\phi}$. Since $h_\gamma, h_\alpha$ have the same weight, that means that they are associate. Hence, there exists $\lambda \in \K$ such that $h_\gamma = \lambda h_\alpha$ which gives a linear dependence relation between $p,q$, since $\alpha \ne \gamma$. This is a contradiction. It follows from Remark \ref{RemIrreducibleHomogeneous} that $h_\gamma = \lambda Z$ for some monomial $Z$ in the $f_i$, which cannot be in $\{p, q\}$. We get the relation $\lambda Z - \gamma p + q = 0$. By Corollary \ref{coprime}, it follows that $p,q,Z$ are pairwise coprime. This proves the proposition.
\end{proof}

Notice that $\img(\phi)$ is co-finite in $\K$. To see this note that $\phi$ can be naturally defined as a morphism $\phi: U \to \mathbb{P}^1$ with $\phi(M) = [p(M): q(M)]$ and, since $U$ is irreducible, so is $\img(\phi)$. Since, clearly, the latter is not a single point, this implies the claim. In particular, the number of relations provided by Proposition~\ref{TrinomialRelation} is finite. We let $\mathbb{K}^*\setminus\img(\phi) = \{\gamma_1, \ldots, \gamma_m\}$. For each $1 \le i \le m$, we let $H_i$ denote the relation of the form $\lambda_i Z_i - \gamma_i p + q$ obtained in the proof of Proposition \ref{TrinomialRelation}, where $\lambda_i$ is some scalar and $Z_i$ some monomial of weight $\chi$ in the $f_i$. Note that $\{p, q, Z_1, \ldots, Z_m\}$ are pairwise coprime. In particular, $m+2 \le r$.

\begin{prop}
\phantomsection\label{idealofreln}
    The ideal of relations among the $f_i$ is generated by the $m$ relations $H_1, \ldots, H_m$.
\end{prop}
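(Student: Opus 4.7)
The plan is to establish $I := \ker\pi = (H_1,\ldots,H_m) =: J$, where $\pi \colon R := \mathbb{K}[X_1,\ldots,X_r] \twoheadrightarrow \SI(Q,\mathcal{C})$ is the surjection sending $X_i \mapsto f_i$. The inclusion $J \subseteq I$ is immediate from Proposition \ref{TrinomialRelation}, so the content is the reverse containment, equivalent to showing that the induced graded surjection $R/J \twoheadrightarrow \SI(Q,\mathcal{C})$ is an isomorphism. Both sides are weight-graded and the map is graded, so it suffices to verify $\dim_{\mathbb{K}}(R/J)_\sigma = \dim_{\mathbb{K}} \SI(Q,\mathcal{C})_\sigma = n+1$ for each weight $\sigma = n\chi + \rem{\sigma}$, the right equality being Proposition \ref{basisnchi}. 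Since the surjection already forces $\dim(R/J)_\sigma \ge n+1$, the task reduces to showing that every monomial of weight $\sigma$ in $R$ is congruent modulo $J$ to a $\mathbb{K}$-linear combination of the standard monomials $\tilde p^{\,i}\tilde q^{\,n-i}\tilde X$, where $\tilde X \in R$ is the lift of the unique monomial $X$ generating $\SI(Q,\mathcal{C})_{\rem{\sigma}}$.

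I would argue by strong induction on $n$. The base case $n = 0$ is immediate from Proposition \ref{uniquechi} and the UFD property: distinct monomials in the $f_i$ remain linearly independent in $\mathbb{K}[\mathcal{C}]$, so the one-dimensional weight space $\SI(Q,\mathcal{C})_{\rem{\sigma}}$ contains $\tilde X$ as its unique monomial. For $n \ge 1$, given a monomial $\tilde M$ of weight $\sigma$, I factor $\tilde M = \tilde p^{\,a}\tilde q^{\,b}\tilde M_0$ with $a,b$ the largest non-negative integers such that $\tilde p^{\,a}\tilde q^{\,b} \mid \tilde M$; this is well-defined because $\tilde p,\tilde q$ are coprime monomials in $R$ (Corollary \ref{coprime}). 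The weight of $\tilde M_0$ is $(n-a-b)\chi + \rem{\sigma}$, and the maximality of $n$ in the decomposition $\sigma = n\chi + \rem{\sigma}$ forces $a + b \le n$. If $a + b = n$, then $\tilde M_0 = \tilde X$ by the base case analysis and $\tilde M$ is already standard. Otherwise, $\tilde M_0$ is a non-unit monomial of weight $\succeq \chi$, coprime to both $\tilde p$ and $\tilde q$, and I would invoke the sub-claim that some $\tilde Z_{j_0}$ must then divide $\tilde M_0$. Granting this, writing $\tilde M_0 = \tilde Z_{j_0}\tilde M_1$ and applying $H_{j_0}$ yields
\[
\tilde M \equiv \lambda_{j_0}^{-1}\bigl(\gamma_{j_0}\tilde p^{\,a+1}\tilde q^{\,b}\tilde M_1 - \tilde p^{\,a}\tilde q^{\,b+1}\tilde M_1\bigr) \pmod{J},
\]
a linear combination of two monomials of weight $\sigma$, each with a strictly larger $\tilde p$- or $\tilde q$-exponent. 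This process terminates (the exponents are bounded by $n$), and the residuals $\tilde M_1$ of weight $\sigma-\chi$ that appear along the way reduce to standard form by the inductive hypothesis.

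The principal obstacle is the sub-claim: that any monomial $\tilde M_0$ of weight $\succeq \chi$ coprime to $\tilde p$ and to $\tilde q$ must admit some $\tilde Z_{j}$ as a factor. Its proof rests on two structural facts. First, the complete set of weight-$\chi$ monomials in the $f_i$ is exactly $\{\tilde p,\tilde q,\tilde Z_1,\ldots,\tilde Z_m\}$ and these are pairwise coprime; the pairwise coprimality extends Corollary \ref{coprime} to all pairs by the same minimality-of-$\chi$ argument, and the completeness follows from Propositions \ref{2-dim} and \ref{TrinomialRelation}. Second, each generator $f_i$ has weight either equal to $\chi$---in which case $f_i \in \{\tilde p,\tilde q\}$---or lying in the $\chi$-primitive cone $\Sigma_0 := \{\rho \in \Sigma(Q,\mathcal{C}) : \rho - \chi \notin \Sigma(Q,\mathcal{C})\}$; the reason is that any element of $\SI(Q,\mathcal{C})_{n\chi+\rem{\sigma}}$ with $n \ge 1$ factors in the UFD $\mathbb{K}[\mathcal{C}]$ as $X_{\rem{\sigma}}\prod_k(\alpha_k p - q)$, and irreducibility combined with the non-coincidence of the $f_i$ with any $h_\alpha$ (Remark \ref{RemIrreducibleHomogeneous}) rules out $f_i$ of such weight apart from $\tilde p$ and $\tilde q$ themselves. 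Given these two facts, a direct combinatorial argument on the exponent vector of $\tilde M_0$---showing that its support, being coprime to the supports of both $\tilde p$ and $\tilde q$ while realizing a weight $\succeq \chi$ inside the monoid generated by $\Sigma_0$, must decompose to reveal a weight-$\chi$ sub-vector matching some $\tilde Z_{j_0}$---completes the reduction.
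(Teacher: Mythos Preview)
Your overall strategy---verify $\dim_{\mathbb{K}}(R/J)_\sigma = n+1$ by reducing every weight-$\sigma$ monomial modulo $J$ to the span of $\{\tilde p^{\,i}\tilde q^{\,n-i}\tilde X\}$---is sound and is exactly what the paper does. The problem lies entirely in how you justify the sub-claim.

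Your ``second structural fact'' is false as stated. An irreducible $f_i$ of weight $\chi$ lies in the two-dimensional space $\SI(Q,\mathcal{C})_\chi$, hence equals $c_1p+c_2q$; when both coefficients are nonzero this is an associate of some $h_\alpha$. Remark~\ref{RemIrreducibleHomogeneous} rules out only $\alpha\in\img(\phi)$, so nothing prevents $\alpha=\gamma_j$, in which case $f_i$ is (up to scalar) the monomial $Z_j$ itself. Thus the dichotomy ``weight in $\Sigma_0$, or $f_i\in\{p,q\}$'' fails; the correct statement is that such an $f_i$ is one of $p,q,Z_1,\dots,Z_m$. More seriously, even after this correction you never actually give the promised ``direct combinatorial argument on the exponent vector,'' and I do not see a purely lattice-theoretic reason why a product of $f_i$'s with weights in $\Sigma_0$ that lands in $\chi+\Sigma(Q,\mathcal{C})$ must contain the full exponent vector of some $Z_j$ as a sub-vector.

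The fix is to apply the very factorization you quote, but to $M_0$ (or directly to an arbitrary monomial $Y$) rather than to the individual $f_i$. From Proposition~\ref{basisnchi} one has $M_0 = X'\cdot P(p,q)$ in $\mathbb{K}[\mathcal{C}]$ with $P$ homogeneous of degree $n'\ge 1$; factor $P=\lambda\, p^a\prod_k h_{\mu_k}$ and compare irreducible factorizations in the UFD. Every prime factor of the monomial $M_0$ is some $f_i$, and no $f_i$ vanishes on $U$, so no $h_{\mu_k}$ with $\mu_k\in\img(\phi)$ can occur. Hence each $h_{\mu_k}$ is, up to scalar, one of $p,q,Z_1,\dots,Z_m$; coprimality of $M_0$ with $p$ and $q$ eliminates the first two, and unique factorization yields $\tilde M_0 = \tilde X'\prod_k \tilde Z_{j_k}$ \emph{as a monomial in $R$}. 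This is precisely the paper's argument. Once you have it, your induction becomes superfluous: modulo $J$ each $\tilde Z_{j_k}$ is replaced by $\lambda_{j_k}^{-1}(\gamma_{j_k}\tilde p - \tilde q)$, so $\tilde M_0$ lands in one step in the $\mathbb{K}$-span of $\{\tilde p^{\,i}\tilde q^{\,n'-i}\tilde X'\}$.
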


\begin{proof}
 Consider the decomposition of $\sigma$ as $\sigma = n\chi + \rem{\sigma}$. Recall that $\SI(Q,\mathcal{C})_{\sigma}$ has a basis given by $\{ p^iq^jX\}_{i+j=n}$. Let $Y$ be any monomial in the $f_i$ of weight $\sigma$ which is not in this basis. It follows from Proposition~\ref{basisnchi} that $Y = XY'$ where $Y'$ is some homogeneous polynomial $P(p,q)$ of degree $n$ in $p,q$. Then $P$ factors as $P=\lambda p^a \prod_{\mu\in A} h_\mu$ for some $a$ and some scalar $\lambda$.

Writing $Y'$ as $Y' = \prod_{i=1}^r f_i^{a_i}$
we have $\prod_{i=1}^r f_i^{a_i} = \lambda p^a \prod_{\mu\in A} h_\mu$.
By the unique factorization, this implies that
$h_\mu$ is, up to a scalar, a monomial in the $f_i$ for each $\mu \in A$.
Every such $\mu$ cannot lie in $\img(\phi)$ since none of the $f_i$ vanish 
anywhere on $U$.
\end{proof}

\subsection{Main Result}
Now that we have determined the generators, the structure of the non-trivial weight spaces and the type of algebraic relations that arise, we are ready to prove the main result.
\begin{thm}
\phantomsection\label{mainthm}
    The semi-invariant ring $\SI(Q,\mathcal{C})$ is a complete intersection of codimension $\#\lbrace\text{monomials of weight } \chi\rbrace-2$.
\end{thm}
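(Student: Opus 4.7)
The plan is to combine the presentation of $\SI(Q,\mathcal{C})$ supplied by Propositions~\ref{gen} and~\ref{idealofreln} with a fiber-dimension calculation exploiting the pairwise coprimality of the weight-$\chi$ monomials. Set $A = \K[T_1, \ldots, T_r]$ and $J = (H_1, \ldots, H_m)$ with $T_i \mapsto f_i$ and $H_i = \lambda_i Z_i - \gamma_i p + q$, so $\SI(Q,\mathcal{C}) \cong A/J$. The monomials of weight $\chi$ are precisely $\{p, q, Z_1, \ldots, Z_m\}$ (cardinality $m+2$), so the target codimension equals $m$, which coincides with the number of generators of $J$. By Krull's Hauptidealsatz $\operatorname{ht}(J) \le m$, and since $A$ is Cohen-Macaulay the complete intersection conclusion will follow immediately once the equality $\dim A/J = r - m$ is established.

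To compute this dimension I would partition $\{1, \ldots, r\}$ into disjoint subsets $I_p, I_q, I_1, \ldots, I_m, I_0$ according to which of the pairwise coprime monomials each $f_j$ divides, with $I_0$ collecting the indices $j$ for which $f_j$ divides none of $p, q, Z_1, \ldots, Z_m$. Coprimality (Proposition~\ref{TrinomialRelation}) ensures these subsets really are disjoint. The variables $T_j$ with $j \in I_0$ appear in no relation and contribute $|I_0|$ free parameters. For the remaining coordinates, consider the projection $\pi: \mathrm{Spec}(A/J) \to \mathbb{A}^{|I_p|+|I_q|+|I_0|}$ onto the $T_j$ with $j \in I_p \cup I_q \cup I_0$. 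Because each $H_i$ involves only variables with indices in $I_p \cup I_q \cup I_i$, the $m$ relations decouple once $(x_p, x_q, x_0)$ is fixed: the fiber is the product $\prod_{i=1}^m F_i(x_p, x_q)$, where $F_i(x_p, x_q) \subseteq \mathbb{A}^{|I_i|}$ is cut out by the single equation $\lambda_i Z_i = \gamma_i p(x_p) - q(x_q)$.

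The crucial step, and the main subtlety, is verifying that every $F_i(x_p, x_q)$ is a non-empty hypersurface of dimension exactly $|I_i| - 1$, uniformly over all fibers. Without uniformity, upper-semicontinuous jumps could inflate $\dim A/J$ beyond $r - m$ and break the height count. Uniformity holds because $Z_i$ is a non-constant monomial: over the algebraically closed field $\K$, the equation $\lambda_i Z_i = c$ admits solutions for every $c \in \K$ (set all but one variable to $1$ when $c \neq 0$, and set any one variable to $0$ when $c = 0$) and, being non-constant in the $T_j$ with $j \in I_i$, never degenerates into an identity. Therefore $\pi$ is surjective with equi-dimensional fibers of dimension $\sum_i(|I_i| - 1) = \sum_i|I_i| - m$, giving
\[
\dim A/J \;=\; |I_p| + |I_q| + |I_0| + \sum_{i=1}^m |I_i| - m \;=\; r - m.
\]
Combined with Cohen-Macaulayness of $A$, this yields that $H_1, \ldots, H_m$ form a regular sequence and hence $\SI(Q,\mathcal{C})$ is a complete intersection of codimension $m = \#\{\text{monomials of weight }\chi\} - 2$.
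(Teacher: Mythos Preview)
Your proof is correct, but it takes a genuinely different route from the paper's. The paper argues via the Jacobian: since $\{p,q,Z_1,\ldots,Z_m\}$ are pairwise coprime, for each $i$ one can pick a variable $T_{j_i}$ appearing only in $Z_i$; the resulting $m\times m$ submatrix of $\bigl(\partial H_i/\partial T_j\bigr)$ is diagonal with nonzero entries over the fraction field of $A/J$, so the Jacobian has rank $m$ at the generic point of $V(J)$. Combined with the primeness of $J$ (coming from $\SI(Q,\mathcal{C})$ being a domain), this forces $\dim V(J)=r-m$ and hence complete intersection. Your approach instead partitions the variables according to which weight-$\chi$ monomial they divide and reads off $\dim V(J)$ directly from the product structure of the fibers of the projection onto the $I_p\cup I_q\cup I_0$ coordinates. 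Both arguments exploit exactly the same coprimality input; yours is more hands-on and does not need the primeness of $J$ (the Krull lower bound plus your uniform fiber upper bound already pin down the dimension), while the paper's Jacobian computation is shorter and more in line with the standard smoothness criterion for complete intersections.
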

\begin{proof}
    Since the semi-invariant ring $\SI(Q,\mathcal{C})$ is a sub-ring of $\mathbb{K}[\mathcal{C}]$, which is an integral domain, it follows that $\SI(Q,\mathcal{C})$ is an integral domain. In particular, the ideal of relations, call it $J$, is a prime ideal.

    From above discussion and Proposition~\ref{idealofreln}, we know that
    the relations $H_1,\dots,H_m$ generate $J$.

    Consider the Jacobian $\mathcal{J}=\left(\frac{\partial H_i}{\partial{f_j}}\right)_{\substack{1\leq i \leq m \\ 1 \leq j \leq r}}$ which is of size $m\times r$. Since $\{p,q, X_1, \ldots, X_m\}$ are pairwise coprime, we see that this matrix has rank $m$.
    Hence, we conclude that $\SI(Q,\mathcal{C})$ is a complete intersection of codimension $\#\lbrace\text{monomials of weight } \chi\rbrace-2$.
\end{proof}
\section{Examples}
In the previous section, we saw that under the UFD assumption and when the maximal orbits have codimension one, the semi-invariant ring $\SI(Q,\mathcal{C})$ is a complete intersection and is not multiplicity free. However, just having an irreducible component with maximal orbits of codimension one might not give us this. To illustrate this, we present the following example.
\subsection{Example 1:}
\phantomsection\label{ex1}
Consider the algebra given by the following quiver $Q$,
\[
    Q:
    \begin{tikzcd}
	   1 \arrow[out=120, in=240,loop,swap,"x"] \arrow[out=60,in=300,loop, "y"]
    \end{tikzcd}
\]
with $R = \langle x^2, y^2, xy, yx \rangle$, and consider the dimension vector $\beta=(2)$. The algebra $A = \K Q/R$ is a string algebra. Notice that $\rep_{\beta}(Q,R)$ has an irreducible component $\mathcal{C}$ containing the following family of non-isomorphic band modules $\{M_{\lambda}\}_{\lambda\not=0}$
\[
    M_{\lambda}:
    \begin{tikzcd}
        \mathbb{K}^2 \arrow[out=120, in=240,loop,swap,"{\spmat{0 & 1\\ 0 & 0}}"] \arrow[out=60,in=300,loop, "{\spmat{0 & \lambda\\ 0 & 0}}"]
    \end{tikzcd}
\]
It is easy to check that the coordinate ring $\mathbb{K}[\mathcal{C}]$ is not a UFD (or we can also use Remark \ref{lrmk} (\ref{notUFD})). Note that string algebras are tame. Therefore, it follows that $\mathrm{codim}\,\mathcal{O}(M_{\lambda})=1$ for $\lambda\not=0$.

Since the algebra given by the above quiver with relations is a local algebra, by using King's criterion (see \parencite{King} or \parencite[Proposition 9.8.3]{weyderk}) we conclude that $\mathcal{C}$ cannot have any $\theta$-semi-stable point for any non-trivial stability parameter $\theta\in\mathbb{Z}^{Q_{0}}$. Therefore, the semi-invariant functions on $\mathcal{C}$ are the constant functions, i.e., the semi-invariant ring $\SI(Q,\mathcal{C})$ is trivial.

The subsequent two examples showcase quivers with relations that not only fulfill the hypothesis of our main result but also yield semi-invariant rings that are complete intersection but not polynomial rings. Moreover, the codimension of the complete intersection can be arbitrarily large.
\subsection{Example 2:}
\phantomsection\label{ex2}
Consider the algebra $A$ given by the following quiver $Q$,
\[
    Q:
    \begin{tikzcd}
    \, & 2 \arrow[rd, "x_2"] \\
    1 \arrow[ur, "x_1"] \arrow[r, "x_3"] \arrow[rd,swap, "x_5"] & 3 \arrow[r, "x_4"] & 5 \\
    \, & 4 \arrow[ur, swap, "x_6"]
    \end{tikzcd}
\]
with the relation $x_1x_2+x_3x_4+x_5x_6=0$ and consider the dimension vector $\beta=(1,1,1,1,1)$. The following is a family of pairwise non-isomorphic representations $\lbrace M_\lambda\rbrace_{\lambda\not=0,-1}$
\[
    M_\lambda:
    \begin{tikzcd}
    \, & \mathbb{K} \arrow[rd, "1"] \\
    \mathbb{K} \arrow[ur, "1"] \arrow[r, "1"] \arrow[rd,swap, "1"] & \mathbb{K} \arrow[r, "\lambda"] & \mathbb{K} \\
    \, & \mathbb{K} \arrow[ur, swap, "-1-\lambda"]
    \end{tikzcd}
\]
We have
\[
    \mathbb{K}[\rep_{\beta}(Q,R)]\cong\mathbb{K}[x_1,x_2,\dotsc,x_6]/(x_1x_2+x_3x_4+x_5x_6).
\]
In order to understand the latter, let us consider the quiver $Q'$ of type $\widetilde{\mathbb{D}}_4$ with the dimension vector $\mathbf{d}=(1,1,1,1,2)$ as follows,
\[
    Q':
    \begin{tikzcd}
    1 \arrow[dr,"a_1"] && 3\\
     & 5 \arrow[ur,"a_3"] \arrow[dr,swap,"a_4"] \\
    2 \arrow[ur,swap,"a_2"] && 4
    \end{tikzcd}
\]
By the calculation in \parencite[Subsection 10.10.2]{weyderk}, we know that $\mathbb{K}[\rep_{\beta}(Q,R)]\cong\SI(Q',\mathbf{d})$. Therefore, $\mathbb{K}[\rep_{\beta}(Q,R)]$ is a UFD and hence $\C:=\rep_{\beta}(Q,R)\cong \mathcal{V}(x_1x_2+x_3x_4+x_5x_6)$ is an irreducible hypersurface in $\mathbb{A}^6$. 

Notice that each $M_\lambda$ is a brick (i.e., has a one dimensional endomorphism algebra), hence $\dim \mathcal{O}(M_{\lambda})=\dim\GL_\beta -1 = 4$. These $M_\lambda$ lie on orbits of maximal dimension in $\C$. Clearly, $\C$ has dimension $5$, so $\C$ has an open set consisting of infinitely many codimension one orbits.

Since the dimension vector for $Q$ is chosen to be $\beta=(1,1,1,1,1)$, we get
\[
    \SI(Q,\C)=\mathbb{K}[\rep_{\beta}(Q,R)]^{\SL_{\beta}}=\mathbb{K}[\rep_{\beta}(Q,R)]=\mathbb{K}[x_1,x_2,\dotsc,x_6]/(x_1x_2+x_3x_4+x_5x_6) 
\]
Therefore, $\SI(Q, \C)$ is a hypersurface, and in particular a complete intersection.

\medskip

Next, we would like to generalize the last example to show that the number of relations occurring in the weight space can be arbitrary. The next example provides an example where the ring of semi-invariants is a complete intersection but not a hypersurface.

\subsection{Example 3:}
Consider the following quiver $Q_{n}$, $n\geq 2$,
\[\begin{tikzcd}
	&&&& 1 \\
	&&&& 2 \\
	{Q_n:} & 0 &&& \vdots &&& {n+3} \\
	&&&& {n+1} \\
	&&&& {n+2}
	\arrow["{x_1}", from=3-2, to=1-5]
	\arrow["{x_2}"', from=3-2, to=2-5]
	\arrow["{x_{n+1}}", from=3-2, to=4-5]
	\arrow["{x_{n+2}}"', from=3-2, to=5-5]
	\arrow["{y_1}", from=1-5, to=3-8]
	\arrow["{y_2}"', from=2-5, to=3-8]
	\arrow["{y_{n+1}}", from=4-5, to=3-8]
	\arrow["{y_{n+2}}"', from=5-5, to=3-8]
\end{tikzcd}\]
with the relations $\lbrace x_1y_1+kx_2y_2+x_{k+2}y_{k+2}\rbrace_{k=1}^n$ and dimension vector $\beta=(1,1,\dotsc,1)$. We have a following family of non-isomorphic representations, $\lbrace M_\lambda\rbrace_{\lambda\in\mathcal{I}}$, where $$\mathcal{I}=\lbrace \lambda\in\mathbb{K}^*\mid\lambda\not=-\frac{1}{k},1\leq k\leq n\rbrace,$$
\[\begin{tikzcd}
	&&&& {\mathbb{K}} \\
	&&&& {\mathbb{K}} \\
	{M_\lambda:} & {\mathbb{K}} &&& \vdots &&& {\mathbb{K}} \\
	&&&& {\mathbb{K}} \\
	&&&& {\mathbb{K}}
	\arrow["1", from=3-2, to=1-5]
	\arrow["1"', from=3-2, to=2-5]
	\arrow["1", from=3-2, to=4-5]
	\arrow["1"', from=3-2, to=5-5]
	\arrow["1", from=1-5, to=3-8]
	\arrow["\lambda"', from=2-5, to=3-8]
	\arrow["{-1-(n-1)\lambda}", from=4-5, to=3-8]
	\arrow["{-1-n\lambda}"', from=5-5, to=3-8]
\end{tikzcd}\]

Consider the polynomial ring $\mathbb{K}[x_i, y_i \mid 1 \le i \le n+2]$ and the ideal of this ring generated by the above given relations,
\[
    I_{n}=(x_1y_1+kx_2y_2+x_{k+2}y_{k+2} \mid 1 \leq k \leq n)
\]
Then, we have,
\[
    \mathbb{K}[\rep_{\beta}(Q_n,R)]=\mathbb{K}[x_i, y_i \mid 1 \le i \le n+2]/I_n
\]
\begin{prop}
\phantomsection\label{intdomt_n}
    The ring $T_n\coloneqq\mathbb{K}[x_i, y_i \mid 1 \le i \le n+2]/I_n$ is an integral domain.
\end{prop}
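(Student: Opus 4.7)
The plan is to prove the statement by induction on $n$, exploiting the observation that only the last relation $f_n = x_1y_1 + nx_2y_2 + x_{n+2}y_{n+2}$ in the definition of $I_n$ involves the fresh variables $x_{n+2}, y_{n+2}$, and these appear only through the single monomial $x_{n+2}y_{n+2}$. Setting $T_0 \coloneqq \mathbb{K}[x_1, y_1, x_2, y_2]$, one then has a recursive presentation
\[
    T_n \;\cong\; T_{n-1}[x_{n+2},y_{n+2}]\big/\bigl(x_{n+2}y_{n+2} + (x_1y_1 + nx_2y_2)\bigr)
\]
for every $n \geq 1$. This reduces the proposition to two ingredients: an extension lemma stating that \emph{if $S$ is an integral domain and $a \in S$ is nonzero, then $S[x,y]/(xy+a)$ is an integral domain}, and the verification that the element $a_n \coloneqq x_1y_1 + nx_2y_2$ is nonzero in $T_{n-1}$.

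For the extension lemma, I would first establish the $S$-module normal form on $A \coloneqq S[x,y]/(xy+a)$ arising from the relation $xy = -a$: every element of $A$ is uniquely expressible as $c_0 + \sum_{i \geq 1} c_i x^i + \sum_{j \geq 1} d_j y^j$ with $c_i, d_j \in S$. A short direct calculation in this basis shows that $x$ is a nonzerodivisor in $A$, using that $S$ is a domain and $a \neq 0$. The localization $A[x^{-1}]$ is then isomorphic to the Laurent polynomial ring $S[x, x^{-1}]$, since $y = -a/x$ there. Because $A$ embeds into the domain $A[x^{-1}]$, the ring $A$ is itself a domain.

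To verify $a_n \notin I_{n-1}$, I would equip the ambient polynomial ring with the standard grading $\deg(x_i) = \deg(y_i) = 1$. The generators $f_k = x_1y_1 + kx_2y_2 + x_{k+2}y_{k+2}$ of $I_{n-1}$ and the element $a_n$ are all homogeneous of degree $2$, so any expression $a_n \in I_{n-1}$ reduces, by extracting the degree-zero parts of the coefficients, to $a_n = \sum_{k=1}^{n-1} c_k f_k$ with $c_k \in \mathbb{K}$. The coefficient of the independent monomial $x_{k+2}y_{k+2}$ then forces $c_k = 0$ for each $k$, so $\sum_k c_k = 0$, contradicting the coefficient $1$ of $x_1y_1$ on the left. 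The base case $n=1$ is immediate since $a_1 = x_1y_1 + x_2y_2$ is visibly nonzero in the polynomial ring $T_0$. The only real content of the argument is the extension lemma; the induction and the nonvanishing verification are then routine book-keeping.
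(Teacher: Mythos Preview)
Your argument is correct and takes a genuinely different route from the paper. The paper performs the linear change of variables $x_i=z_{2i-1}-z_{2i}$, $y_i=z_{2i-1}+z_{2i}$, which turns each relation into $z_{2k+3}^2=p_k$ for an explicit quadratic form $p_k$ in the remaining $z$'s; it then shows $[L(\sqrt{p_1},\dots,\sqrt{p_n}):L]=2^n$ and builds an explicit injection of $T_n$ into this field. Your approach instead peels off one relation at a time via the recursive presentation $T_n\cong T_{n-1}[x_{n+2},y_{n+2}]/(x_{n+2}y_{n+2}+a_n)$ and the clean lemma that $S[x,y]/(xy+a)$ is a domain whenever $S$ is and $a\neq 0$. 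Your route is more elementary---it avoids the irreducibility check on the $p_k$ and the square-root bookkeeping---and it makes visible that the particular coefficients $k$ in front of $x_2y_2$ are irrelevant for integrality (they only matter for the UFD statement proved afterwards). The paper's method, on the other hand, identifies the fraction field of $T_n$ explicitly as a multiquadratic extension, which is extra information your proof does not produce; neither proof feeds directly into the subsequent UFD argument, which is a separate induction using Nagata's criterion.
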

\begin{proof}
    Make a change of coordinates as follows:
    \begin{align*}
        &x_{i}=z_{2i-1}-z_{2i}\\
        &y_{i}=z_{2i-1}+z_{2i}
    \end{align*}
    for $1\leq i\leq (n+2)$. Then, for $1\leq k\leq n$,
    \[
        x_1y_1+kx_2y_2+x_{k+2}y_{k+2}= z_1^2-z_2^2+kz_3^2-kz_4^2+z_{2k+3}^2-z_{2k+4}^2
    \]
    and let
    \[
        p_k\coloneqq z_2^2-z_1^2+kz_4^2-kz_3^2+z_{2k+4}^2.
    \]
    Therefore, $I_n=(z_5^2-p_1,z_7^2-p_2,\dotsc,z_{2n+3}^2-p_n)$, and hence, $T_n=\mathbb{K}[z_1,z_2,\dotsc,z_{2n+4}]/I_n$. We first note that the $p_k$ are irreducible as quadratic polynomials; this may be confirmed by direct computations. It follows from this that each $p_k$ and each ratio $\frac{p_k}{p_{k+1}}$ is a non-square in $L\coloneqq\mathbb{K}(z_1,z_2,\dotsc,z_{2n+4})$.

    By induction, it is not hard to prove that $\sqrt{p_i}\notin L(\sqrt{p_1},\sqrt{p_2},\dotsc,\sqrt{p_{i-1}})$, which implies that
    \[
         [L(\sqrt{p_1},\dotsc,\sqrt{p_n}):L]=2^n
    \]
    Therefore, $\left\{\sqrt{p_1^{i_1}p_2^{i_2}\dotsb p_n^{i_n}} \mid i_{l}\in\lbrace 0,1\rbrace\right\}$ is an  $L-$basis
    for $L(\sqrt{p_1},\dotsc,\sqrt{p_n})$.

    Let $S\coloneqq\mathbb{K}[z_1,z_2,z_3,z_4,z_6,z_8,\dotsc,z_{2n+4}][z_5,z_7,\dotsc,z_{2n+3}]$. Now, define
    \[
        \begin{array}{cccc}
            \phi: & S & \rightarrow & L(\sqrt{p_1},\sqrt{p_2},\dotsc,\sqrt{p_n})\\
            \, & f & \mapsto & f(z_1,z_2,z_3,z_4,\sqrt{p_1},z_6,\sqrt{p_2},z_8,\dotsc,\sqrt{p_n},z_{2(n+2)}) 
        \end{array}
    \]
    Clearly, $I_n\subseteq \mathrm{ker}(\phi)$. Let $g\in \mathrm{ker}(\phi)$, then successively dividing $g$ by $z_{2k+3}^2-p_k$, for $1\leq k \leq n$, we can write $g$ as,
    \[
        g=\sum_{0\leq i_1,i_2,\dotsc,i_n\leq 1}Q_{i_1i_2\dotsc i_n}z_5^{i_1}z_7^{i_2}\dotsb z_{2n+3}^{i_n} + h,
    \]
    where $h\in I_n$ and $Q_{i_1i_2\dotsc i_n}\in\mathbb{K}[z_1,z_2,z_3,z_4,z_6,z_8,\dotsc,z_{2(n+2)}]$.
    
    Now,
    \[
        \phi(g)=\sum_{0\leq i_1,i_2,\dotsc,i_n\leq 1}Q_{i_1i_2\dotsc i_n}\sqrt{p_1^{i_1}p_2^{i_2}\dotsb p_n^{i_n}}=0
    \]
    Therefore, each $Q_{i_1i_2\dotsc i_n}=0$ and hence, $g\in I_n$. So, $\mathrm{ker}(\phi)=I_n$.\par
    Therefore, we have an injective ring homomorphism,
    \[
        T_n\hookrightarrow L(\sqrt{p_1},\sqrt{p_2},\dotsc,\sqrt{p_n})
    \]
    Hence, $T_n$ is an integral domain.
\end{proof}
\begin{lem}
\phantomsection\label{primen}
    The element $\overline{x}_1\in T_n$ is a prime element.
\end{lem}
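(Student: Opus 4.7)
The plan is to show that $T_n/(\overline{x}_1)$ is an integral domain, which is equivalent to $\overline{x}_1$ being prime since $T_n$ itself is a domain by Proposition \ref{intdomt_n}.

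First, I would re-express $T_n$ using the change of coordinates $x_i = z_{2i-1} - z_{2i}$, $y_i = z_{2i-1} + z_{2i}$ from the proof of Proposition \ref{intdomt_n}. In these coordinates $\overline{x}_1 = \overline{z_1 - z_2}$, and killing it amounts to setting $z_1 = z_2$. The polynomials $p_k = z_2^2 - z_1^2 + kz_4^2 - kz_3^2 + z_{2k+4}^2$ then specialize to
\[
    \tilde{p}_k := k z_4^2 - kz_3^2 + z_{2k+4}^2,
\]
while $y_1 = z_1 + z_2$ becomes $2z_1$, acting as a free variable that does not appear in any relation. Thus
\[
    T_n/(\overline{x}_1) \;\cong\; S_n[y_1], \qquad S_n := \mathbb{K}[z_3, z_4, z_5, z_6, \ldots, z_{2n+4}]/(z_{2k+3}^2 - \tilde{p}_k \mid 1\le k\le n),
\]
and since a polynomial ring over a domain is a domain, it suffices to show that $S_n$ is a domain.

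Next, I would mirror the argument of Proposition \ref{intdomt_n}, now for the modified polynomials $\tilde{p}_k$. Each $\tilde{p}_k$ is an irreducible quadratic in the UFD $\mathbb{K}[z_3, z_4, z_6, z_8, \ldots, z_{2n+4}]$: viewed as a polynomial in $z_{2k+4}$ it is $z_{2k+4}^2 + k(z_4^2 - z_3^2)$, and $k(z_3^2 - z_4^2) = k(z_3-z_4)(z_3+z_4)$ is squarefree, hence not a square. Moreover, distinct $\tilde{p}_k$'s involve distinct variables $z_{2k+4}$, so they are pairwise non-associate. It follows that every nontrivial product $\prod_{k\in S}\tilde{p}_k$ is squarefree and therefore not a square in $L := \mathbb{K}(z_3, z_4, z_6, z_8, \ldots, z_{2n+4})$, which in characteristic zero gives $[L(\sqrt{\tilde{p}_1}, \ldots, \sqrt{\tilde{p}_n}) : L] = 2^n$.

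Finally, I would define the evaluation homomorphism
\[
    \phi: \mathbb{K}[z_3, z_4, z_5, \ldots, z_{2n+4}] \longrightarrow L(\sqrt{\tilde{p}_1}, \ldots, \sqrt{\tilde{p}_n}), \qquad z_{2k+3} \mapsto \sqrt{\tilde{p}_k},
\]
fixing all other variables, and repeat the successive-division and $L$-linear-independence argument from Proposition \ref{intdomt_n} to show $\ker(\phi) = (z_{2k+3}^2 - \tilde{p}_k \mid 1\le k\le n)$. This embeds $S_n$ into a field, so $S_n$ is a domain, and the proof is complete. The main technical obstacle lies in this last degree and injectivity computation, but both transfer essentially verbatim from the proof of Proposition \ref{intdomt_n}, since the crucial structural feature — each $\tilde{p}_k$ contributing its own unique variable $z_{2k+4}$ — is preserved after setting $z_1 = z_2$.
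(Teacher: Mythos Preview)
Your proof is correct and follows essentially the same strategy as the paper: show that $T_n/(\overline{x}_1)\cong S[y_1]$ for a ring $S$ that is a domain by the method of Proposition~\ref{intdomt_n}. The only difference is cosmetic---the paper stays in the $x_i,y_i$ coordinates (so $S=\mathbb{K}[x_2,y_2,\dotsc,y_{n+2}]/(kx_2y_2+x_{k+2}y_{k+2})$), whereas you pass to the $z_i$ coordinates first; the underlying argument is the same.
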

\begin{proof}
    Let $J_n\coloneqq(kx_2y_2+x_{k+2}y_{k+2} \mid 1 \leq k \leq n)$. Consider the ideal $(\overline{x}_1)\subseteq T_n$ generated by $\overline{x}_1$.
    \begin{align*}
        T_n/(\overline{x}_1)&\cong\mathbb{K}[x_1,y_1,x_2,\dotsc,y_{n+2}]/(I_n,x_1)\\
        &\cong\mathbb{K}[x_2,y_2,x_3,\dotsc,y_{n+2},y_1]/J_n
    \end{align*}
    Let $S\coloneqq\mathbb{K}[x_2,y_2,x_3,\dotsc,y_{n+2}]/J_n$. Using a similar argument as in the proof of Proposition~\ref{intdomt_n}, we conclude $S$ is an integral domain. Therefore,
    \[
        T_n/(\overline{x}_1)\cong S[y_1]
    \]
    is an integral domain, and hence, $\overline{x}_1\in T_n$ is a prime element.
\end{proof}
\begin{prop}
    The ring $T_n$ is a UFD.
\end{prop}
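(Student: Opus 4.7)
The plan is to invoke Nagata's criterion: if $R$ is a Noetherian integral domain and $S \subset R$ is a multiplicative subset generated by prime elements, then $R$ is a UFD whenever $S^{-1}R$ is a UFD. By Proposition~\ref{intdomt_n} and Lemma~\ref{primen}, $T_n$ is a Noetherian integral domain in which $\overline{x}_1$ is prime. It therefore suffices to prove that the localization $T_n[\overline{x}_1^{-1}]$ is a UFD.

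Within this localization, the relation $x_1 y_1 + x_2 y_2 + x_3 y_3 = 0$ is solvable for $y_1 = -(x_2 y_2 + x_3 y_3)/x_1$, which eliminates $y_1$ from the presentation. Substituting into the remaining defining relations yields $x_{k+2} y_{k+2} - x_3 y_3 + (k-1) x_2 y_2 = 0$ for $k = 2, \ldots, n$, none of which involves $x_1$ or $y_1$. Thus
\[
    T_n[\overline{x}_1^{-1}] \;\cong\; \K[x_1^{\pm 1}] \otimes_{\K} S_n,
\]
where $S_n \coloneqq \K[x_i, y_i \mid 2 \le i \le n+2]/\bigl(x_{k+2} y_{k+2} - x_3 y_3 + (k-1) x_2 y_2 : k = 2, \ldots, n\bigr)$. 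Since tensoring a UFD with a Laurent polynomial ring in one variable preserves the UFD property (Gauss's lemma combined with localization at the prime $t$ in $A[t]$), the problem reduces to showing that $S_n$ is a UFD.

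I would prove this by induction on $n$. The base case $n=1$ is trivial, since $S_1 = \K[x_2,y_2,x_3,y_3]$ is a polynomial ring. For the inductive step, I apply Nagata once more, this time with the element $\overline{x}_{n+2} \in S_n$. Granting for the moment that $\overline{x}_{n+2}$ is prime, localizing at it allows the $k=n$ relation to be solved for $y_{n+2}$, after which the remaining relations (for $k = 2, \ldots, n-1$) involve neither $x_{n+2}$ nor $y_{n+2}$. This gives $S_n[\overline{x}_{n+2}^{-1}] \cong \K[x_{n+2}^{\pm 1}] \otimes_{\K} S_{n-1}$, which is a UFD by the induction hypothesis.

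The main obstacle is verifying that $\overline{x}_{n+2}$ is prime in $S_n$, i.e.\ that $S_n/(\overline{x}_{n+2})$ is an integral domain. Setting $x_{n+2} = 0$ forces $x_3 y_3 = (n-1) x_2 y_2$ from the $k=n$ relation, and substituting back reduces the remaining relations to $x_{k+2} y_{k+2} = (n-k) x_2 y_2$ for $k = 2, \ldots, n-1$, while $y_{n+2}$ becomes a free variable. The change of coordinates $x_i = z_{2i-1} - z_{2i}$, $y_i = z_{2i-1} + z_{2i}$ from the proof of Proposition~\ref{intdomt_n} then presents the non-trivial factor of the quotient as an iterated quadratic extension $R[\sqrt{\tilde p_1}, \ldots, \sqrt{\tilde p_{n-1}}]$ of a polynomial ring $R$, with $\tilde p_k = z_{2k+3}^2 + (n-k)(z_4^2 - z_3^2)$. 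Each $\tilde p_k$ is irreducible as a quadratic in $z_{2k+3}$ (the constant term $(n-k)(z_4^2-z_3^2) = (n-k)(z_4-z_3)(z_4+z_3)$ is not a square in $\K[z_3,z_4]$), and a specialization argument (setting $z_{2k+3} = 0$ for all indices in any subset $A$ except one chosen index $k_0$) shows that no non-trivial product $\prod_{k \in A}\tilde p_k$ is a square in the fraction field, since such a specialization reduces the product to $\tilde p_{k_0} \cdot c \cdot (z_4^2 - z_3^2)^{|A|-1}$ with $c \in \K^*$, which is never a square. Hence the iterated extension has the expected degree $2^{n-1}$ and is a domain, so $\overline{x}_{n+2}$ is prime and the induction closes.
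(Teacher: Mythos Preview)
Your argument is correct and follows the same overall strategy as the paper: apply Nagata's criterion with the prime $\overline{x}_1$ (Lemma~\ref{primen}), eliminate $y_1$ in the localization, and finish by induction. The one difference is that after eliminating $y_1$ you obtain the ring you call $S_n$ and then launch a \emph{second} induction on $S_n$, which forces you to verify from scratch that $\overline{x}_{n+2}$ is prime in $S_n$ via an iterated-quadratic-extension argument. The paper avoids all of this by observing directly that $S_n \cong T_{n-1}$: relabel $(x_3,-y_3)\mapsto(X_1,Y_1)$, $(x_2,y_2)\mapsto(X_2,Y_2)$, and $(x_{k+3},y_{k+3})\mapsto(X_{k+2},Y_{k+2})$, and your relations $(k-1)x_2y_2 - x_3y_3 + x_{k+2}y_{k+2}$ become exactly the defining relations of $I_{n-1}$. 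This closes the outer induction in one line and makes the separate primality check for $\overline{x}_{n+2}$ unnecessary. Your route works, but recognizing that isomorphism is the cleaner move.
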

\begin{proof}
    We will use induction on $n$ and Nagata's criterion for UFD (refer to \parencite[Lemma 2]{Nagata}). From Example~\ref{ex1}, we know that the statement is true for $n=1$.

    Assume the statement is true for all $k<n$. Now, we'll prove the statement for $k=n$.

    Subtracting the first relation from any other defining relation in $I_n$  gives us $kx_2y_2-x_3y_3+x_{k+3}y_{k+3}$ for $1 \leq k \leq n-1$. Now, let
    $A_{n-1}\coloneqq(kx_2y_2-x_3y_3+x_{k+3}y_{k+3} \mid 1 \leq k \leq n-1)$. By Proposition~\ref{intdomt_n}, we know that $T_n$ is an integral domain and by Lemma~\ref{primen}, we know that $\overline{x}_1\in T_n$ is a prime element. Now, localize $T_n$ at $\overline{x}_1$. Then,
    \begin{align*}
        (T_n)_{\overline{x}_1}\cong\mathbb{K}[x_2,y_2,x_3\dotsc,y_{n+2},x_1,x_1^{-1}]/A_{n-1} \cong T[x_1,x_1^{-1}]
    \end{align*}
    where $T\coloneqq\mathbb{K}[x_2,y_2,x_3,\dotsc,y_{n+2}]/A_{n-1}$.
    Notice that $T\cong T_{n-1}$ which by the induction hypothesis is a UFD. Thus the localization of $T_n$ at the prime element $\overline{x}_1$ 
    is a UFD. By Nagata's criterion, we conclude that $T_n$ is a UFD.
\end{proof}

By Proposition~\ref{intdomt_n}, we get,
\[
    \rep_{\beta}(Q_n,R)=\mathcal{V}(I_n)
\]
is irreducible in $\mathbb{A}^{2n+4}$. Since the dimension vector for $Q_n$ is chosen to be $\beta=(1,1,\dotsc,1)$, we get,
\[
    \SI(Q_n,\beta)=\mathbb{K}[\rep_{\beta}(Q_n,R)]^{\SL_{\beta}}=T_n 
\]
Therefore, by the main result, we know that $\SI(Q_n,\beta)$ is a complete intersection of codimension $n$.
\begin{rmk}
    Notice that the coefficients $k$ in front of $x_2y_2$ in the defining generators of $I_n$ in the above example are needed for the ring $T_n$ to be a UFD, for $n\geq 2$. If we replace these coefficients by $1$, the ideal $A_{n-1}$ in the above proof will be generated by binomials and the induction will not be valid as $T_2$ would be $\mathbb{K}[x_1, x_2, x_3, x_4, y_1, y_2, y_3, y_4]/ ( x_1y_1 + x_2y_2 + x_3y_3, x_4y_4 - x_3y_3)$ which is not a UFD.
\end{rmk}

\section{The Case of Hereditary Algebras}
In this section we examine the situation in the context of hereditary algebras.
Let $Q=(Q_0,Q_1)$ be an acyclic quiver and $\beta$ be a dimension vector. Notice that the representation space $\rep_{\beta}(Q)$ is an affine space and hence, irreducible. Also, the coordinate ring $\mathbb{K}[\rep_{\beta}(Q)]$ is a polynomial ring, hence is a UFD. We will need few definitions before we get to the main result of this section. We will denote the dimension vector of a representation $V$ of $Q$ by $\underline{\dim}V$.
\begin{defn}
    A representation $V\in\rep_{\beta}(Q)$ is said to be a $\emph{general representation}$ (with respect to a property $\mathcal{P}$) if there exists an open (dense) subset $U\subseteq\rep_{\beta}(Q)$ such that $V\in U$ and every representation in $U$ satisfies the property $\mathcal{P}$.
\end{defn}
\begin{defn}
    A representation $V\in \rep_{\beta}(Q)$ is called a $\emph{Schur representation}$ (or a $\emph{brick}$) if $\mathrm{End}(V)$ is a division algebra (or equivalently, coincides with the scalar multiples of the identity, since $\mathbb{K}$ is assumed to be algebraically closed). A dimension vector $\beta$ is called a $\emph{Schur root}$ if a general representation in $\rep_{\beta}(Q)$ is a brick, i.e. there is an open non-empty set $U\subseteq\rep_{\beta}(Q)$ such that any representation in $U$ is a brick.
\end{defn}
For a quiver $Q=(Q_0,Q_1)$, we define the Euler form $\langle\cdot,\cdot\rangle$ on the Grothendieck group $K_{0}(\mathbb{K}Q)$, which can be identified with $\mathbb{Z}^{Q_0}$, in the following way:
\[
    \langle\alpha,\beta\rangle\coloneqq\sum_{x\in Q_0}\alpha(x)\beta(x)-\sum_{a\in Q_1}\alpha(ta)\beta(ha),\;\text{where}\;\alpha,\beta\in\mathbb{Z}^{Q_0}
\]
The Euler characteristics of two representations $V,W$ of $Q$ is defined to be
\[
    \chi(V,W)\coloneqq\dim \Hom(V,W)-\dim\Ext^1(V,W)
\]
By \parencite[Proposition 2.5.2]{weyderk}, we know that the above Euler form coincides with the Euler characteristics in the following way: if $V,W$ are two representations of $Q$ then
\[
    \langle\underline{\dim}V,\underline{\dim}W\rangle=\dim \Hom(V,W)-\dim\Ext^1(V,W)=\chi(V,W)
\]

Recall that if $\beta$ is a Schur root, then we call $\beta$ a $\emph{real}$ Schur root if $\langle\beta,\beta\rangle=1$, we call $\beta$ an $\emph{isotropic}$ Schur root if $\langle\beta,\beta\rangle=0$, and we call $\beta$ an $\emph{imaginary}$ Schur root if $\langle\beta,\beta\rangle<0$.

We can also study the decomposition of a representation of dimension $\beta$ into its indecomposable representations. V. Kac defined the canonical decomposition of a dimension vector and showed that if $V\in\rep_{\beta}(Q)$ is within an open (dense) set, then the dimension vector of its indecomposable direct summands are independent of $V$ \parencite[see page 150]{KAC}.

\begin{defn}
    Let $\beta$ be a dimension vector. If $\beta=\beta_1+\beta_2+\dotsc+\beta_t$ is such that all $\beta_i$ are Schur roots and a general representation $V$ of dimension $\beta$ decomposes into its indecomposable direct summands as $V=V_1\oplus V_2\oplus\dotsc\oplus V_t$ where $\underline{\dim}V_i=\beta_i$, then we write
    \[
        \beta= \beta_1\oplus\beta_2\oplus\dotsc\oplus\beta_t
    \]
    and is called the \emph{canonical decomposition} of $\beta$.
\end{defn}
\begin{rmk}
    The canonical decomposition of a dimension vector always exists and is unique (up to rearrangements), see \parencite[page 150]{KAC}.
    
    Let $\mathrm{ext}(\beta_i,\beta_j)$ denote the generic value of $\Ext^1(V_i,V_j)$ on $\rep_{\beta_i}(Q)\times\rep_{\beta_j}(Q)$. For a dimension vector $\beta$, the decomposition $\beta=\beta_1+\beta_2+\dotsc+\beta_t$ yields the canonical decomposition if and only if all $\beta_i$ are Schur roots and $\mathrm{ext}(\beta_i,\beta_j)=0$ for all $i\not=j$ (refer to \parencite[Proposition 3]{KAC} or \parencite[Theorem 11.2.1]{weyderk}).
\end{rmk}

Recall that a dimension vector $\beta$ is called $\emph{prehomogeneous}$ if $\rep_{\beta}(Q)$ contains an open (dense) $\GL_{\beta}$-orbit. In terms of canonical decomposition this means that a dimension vector is prehomogeneous if and only if its canonical decomposition only contains real Schur roots.
\begin{defn}
We call a dimension vector $\beta$ $\emph{almost prehomogeneous}$ if $\rep_{\beta}(Q)$ has an open non-empty set which is a union of codimension one $\GL_{\beta}$-orbits.
\end{defn}

Recall that if $V\in\rep_{\beta}(Q)$, then the $\GL_{\beta}$-orbit of $V$ has codimension equal to $\dim\Ext^1(V,V)$. Hence, we see that $\beta$ is almost prehomogeneous exactly when a general representation $V$ of dimension vector $\beta$ has $\Ext^1(V,V)$ one dimensional.

The proposition below gives a characterisation of almost prehomogeneous dimension vectors and gives an interpretation of such a dimension vector in terms of canonical decomposition.
\begin{prop}
\phantomsection\label{alprehom} 
    Let $Q$ be an acyclic quiver and $\beta$ be a dimension vector. The following statements are equivalent:
    \begin{enumerate}
        \item A general module $M$ decomposes as $M=M_1\oplus M_2$, where 
        \[
            \Ext^1(M,M)=\mathbb{K}=\Ext^1(M_2,M_2)
        \]
        \item The representation space $\rep_{\beta}(Q)$ consists of an open set which is an union of codimension one $\GL_{\beta}$-orbits.
        \item The dimension vector $\beta$ is almost prehomogeneous.
        \item If $\beta= \beta_1\oplus\beta_2\oplus\dotsb\oplus\beta_t$ is the canonical decomposition of $\beta$, then for exactly one $j_0$, $\beta_{j_0}$ is an isotropic Schur root and $\beta_i$ is a real Schur root, for all $i \neq j_0$.
    \end{enumerate}
\end{prop}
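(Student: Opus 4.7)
The plan is to establish the cycle of implications by exploiting the identity $\mathrm{codim}\,\mathcal{O}(V) = \dim\Ext^1(V,V)$ stated just before the proposition, together with the additivity of $\Ext^1$ on direct sums and the characterization of the canonical decomposition via $\mathrm{ext}(\beta_i,\beta_j)=0$. First I would observe that (2) and (3) are literally the definition of ``almost prehomogeneous'' restated, so only (3) $\Leftrightarrow$ (4) and (4) $\Leftrightarrow$ (1) require argument. Throughout, $\beta$ almost prehomogeneous is equivalent to $\dim\Ext^1(V,V) = 1$ for a general $V \in \rep_\beta(Q)$.

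For (3) $\Leftrightarrow$ (4), let $\beta = \beta_1 \oplus \cdots \oplus \beta_t$ be the canonical decomposition. A general $V$ decomposes as $V = V_1 \oplus \cdots \oplus V_t$ with each $V_i$ a general (hence brick) representation of dimension $\beta_i$, so
\[
    \dim\Ext^1(V,V) = \sum_{i,j} \dim\Ext^1(V_i,V_j).
\]
For $i \neq j$, the canonical decomposition criterion gives $\mathrm{ext}(\beta_i,\beta_j) = 0$, and since $(V_1,\dots,V_t)$ is a general point of $\prod_i \rep_{\beta_i}(Q)$, these off-diagonal terms vanish. For the diagonal contributions, $V_i$ being a brick gives $\dim\Ext^1(V_i,V_i) = \dim\Hom(V_i,V_i) - \langle\beta_i,\beta_i\rangle = 1 - \langle\beta_i,\beta_i\rangle$, which is $0$, $1$, or $\geq 2$ according as $\beta_i$ is real, isotropic, or imaginary Schur. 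The total equals $1$ if and only if exactly one $\beta_{j_0}$ is isotropic and all others are real, which is precisely (4).

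For (4) $\Rightarrow$ (1), take $M_2 := V_{j_0}$ and $M_1 := \bigoplus_{i \neq j_0} V_i$. The cross terms $\Ext^1(M_1,M_2)$ and $\Ext^1(M_2,M_1)$ vanish by the canonical decomposition property, and $\Ext^1(M_1,M_1) = 0$ since $M_1$ is a direct sum of rigid real-Schur indecomposables with pairwise vanishing generic ext. Hence $\Ext^1(M,M) = \Ext^1(M_2,M_2) = \K$, since $\beta_{j_0}$ is isotropic. Conversely, (1) immediately yields $\dim\Ext^1(M,M) = 1$ for general $M$, giving (3), which by the previous step gives (4).

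The main subtlety is ensuring that the vanishing of the generic $\mathrm{ext}(\beta_i,\beta_j)$ really does produce the vanishing of $\Ext^1(V_i,V_j)$ for our specific general $V$; this is standard but relies on the observation that decomposing a general $V$ produces a general tuple of summands in each factor. A secondary point worth flagging is that an isotropic Schur root $\beta_{j_0}$ can a priori occur with multiplicity greater than one in a canonical decomposition (each additional copy would contribute $+1$ to $\dim\Ext^1(V,V)$), but condition (4) as stated explicitly forbids this by insisting on a single index $j_0$; one should remark on this point to make the equivalence with (3) unambiguous.
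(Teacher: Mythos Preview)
Your proposal is correct and follows essentially the same approach as the paper: both arguments hinge on the identity $\mathrm{codim}\,\mathcal{O}(V)=\dim\Ext^1(V,V)$, the vanishing of the off-diagonal $\Ext^1(V_i,V_j)$ in the canonical decomposition, and the computation $\dim\Ext^1(V_i,V_i)=1-\langle\beta_i,\beta_i\rangle$ for a brick $V_i$. The only cosmetic difference is that the paper arranges the implications as a cycle $(1)\Rightarrow(2)\Rightarrow(3)\Rightarrow(4)\Rightarrow(1)$ and invokes semicontinuity of orbit codimension explicitly for $(1)\Rightarrow(2)$, whereas you collapse $(2)\Leftrightarrow(3)$ as definitional and argue $(3)\Leftrightarrow(4)$ and $(4)\Rightarrow(1)\Rightarrow(3)$ directly; your closing worry about multiplicities is harmless but unnecessary, since statement~(4) already pins down a single index $j_0$.
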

\begin{proof}
    $(\emph{1.}\Longrightarrow\emph{2.})$ Let $U$ be the union of all codimension 1 orbits. Then, $U$ is non-empty by the statement $\emph{1.}$ Let
    \[
        \begin{array}{cccc}
             \Theta: & \rep_{\beta}(Q) & \longrightarrow & \mathbb{N}\\
            \, & M & \longmapsto & \mathrm{codim}\,\mathcal{O}(M)
        \end{array}
    \]
    We know that the map $\Theta$ is lower semi-continuous. Hence, $\Theta^{-1}([0,1])$ is an open set in $\rep_{\beta}(Q)$. Notice that $\Theta^{-1}(0)=\emptyset$. Therefore, we conclude, $\Theta^{-1}(1)=U$ is an open set.
    
    $(\emph{2.}\Longrightarrow\emph{3.})$ This follows from the definition.
    
    $(\emph{3.}\Longrightarrow\emph{4.})$ Since $\beta$ is almost prehomogeneous, there exists an open set, $U$, which is a union of codimension 1 $\GL_{\beta}$-orbits. Let
    \[
        \beta= \beta_1\oplus\beta_2\oplus\dotsb\oplus\beta_t
    \]
    be the canonical decomposition of $\beta$. Any general representation $N$ in this open set will decompose as
    \[
        N= N_1\oplus N_2\oplus\dotsb\oplus N_t
    \]
    where $N_i$ is an indecomposable representation with dimension vector $\beta_i$ and $\Ext^1(N_j,N_k)=0$, for $j\not=k$. Since $\mathrm{codim}\,\mathcal{O}(N)=1$, we have, $\Ext^1(N,N)=\mathbb{K}$. Using the bilinearity of $\Ext^1$, we conclude that for exactly one $1\leq j\leq t$, $\Ext^1(N_j,N_j)=\mathbb{K}$, and $\Ext^1(N_i,N_i)=0$, for all $1\leq i\leq t$ such that $i\not=j$. Since each $N_i$ is indecomposable, we conclude that exactly one $\beta_j$, for $1\leq j\leq t$ is an isotropic Schur root and all other $\beta_i$ are real Schur root, for $1\leq i\leq t$ such that $i\not=j$.
    
    $(\emph{4.}\Longrightarrow\emph{1.})$ This follows from the properties of canonical decomposition and \parencite[Proposition 11.3.7]{weyderk}.
\end{proof}
\begin{rmk}
    If in the statement $(1)$ of above proposition, $M_2$ is chosen to be minimal with respect to the property $\Ext^1(M_2,M_2)=\mathbb{K}$, then $M_2$ is a brick. This is due to the following reason-- using \parencite[Proposition 11.3.7]{weyderk} and the fact that $M_2$ is minimal with respect to the property $\Ext^{1}(M_2,M_2)=\mathbb{K}$, we conclude that the canonical decomposition of $\underline{\dim}M_2$ is a single isotropic Schur root. Therefore, by using the Euler form, we get
    \[
        0=\langle\underline{\dim} M_2,\underline{\dim} M_2\rangle=\dim\Hom(M_2,M_2)-\dim\Ext^{1}(M_2,M_2)=\dim\mathrm{End}(M_2)-1
    \]
    Hence, we get that $\mathrm{End}(M_2)\cong\mathbb{K}$, i.e., $M_2$ is a brick.
\end{rmk}

The following theorem was first proven in \parencite{weypaq} for the case of an isotropic Schur root. Specifically, when the canonical decomposition involves both real Schur roots and a single isotropic Schur root, it is possible to simplify the problem by considering a smaller quiver. This reduction can be achieved by applying Schofield's reduction theorem (refer to \parencite[Theorem 11.4.9]{weyderk}), which allows one to remove the real Schur roots from the beginning and from the end of the canonical decomposition. Consequently, the problem transforms into computing the ring of semi-invariants for a single isotropic Schur root over a smaller quiver.

In \parencite{weypaq}, it was proven that the ring of semi-invariants is a hypersurface. However, considering that our techniques extend beyond the hereditary case and building upon the examples presented in the previous section, the most favorable outcome we can obtain is a complete intersection.

\begin{thm}
\phantomsection\label{acycmainthm}
    Let $Q$ be an acyclic quiver and $\beta$ be a dimension vector whose canonical decomposition contains exactly one isotropic Schur root and all others are real Schur root. Then, the semi-invariant ring $\SI(Q,\beta)$ is a complete intersection.
\end{thm}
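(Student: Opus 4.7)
The plan is to reduce the theorem to a direct application of Theorem~\ref{mainthm} with $\mathcal{C} = \rep_\beta(Q)$. Since $Q$ is acyclic and there are no relations, $\rep_\beta(Q)$ is an affine space; in particular, it is irreducible and its coordinate ring is a polynomial ring, which is certainly a UFD. Thus two of the three hypotheses of Theorem~\ref{mainthm} hold automatically, and the only remaining task is to verify that $\rep_\beta(Q)$ is not an orbit closure and that it contains a codimension one orbit.

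For this, I would translate the canonical decomposition assumption into a geometric statement via the equivalence $(4)\Longleftrightarrow(3)$ in Proposition~\ref{alprehom}: the assumption that the canonical decomposition of $\beta$ consists of exactly one isotropic Schur root together with real Schur roots is precisely the statement that $\beta$ is almost prehomogeneous. By the definition recalled just above Proposition~\ref{alprehom}, this means that $\rep_\beta(Q)$ contains a non-empty open subset $U$ that is a union of codimension one $\GL_\beta$-orbits. In particular $\rep_\beta(Q)$ contains a codimension one orbit, and it cannot be an orbit closure: an orbit closure in an irreducible variety would possess a dense, codimension zero orbit, which cannot coexist with an open set of codimension one orbits (two different dense open sets in the irreducible variety $\rep_\beta(Q)$ would have to meet, forcing their generic orbit dimensions to agree).

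With the hypotheses of Theorem~\ref{mainthm} satisfied, we conclude that $\SI(Q,\beta)$ is a complete intersection, of codimension equal to $\#\{\text{monomials of weight } \chi\} - 2$, where $\chi$ is the unique minimal weight with $\dim_\mathbb{K}\SI(Q,\beta)_\chi \geq 2$. There is no genuine obstacle in the argument: the substantive content has already been placed into Proposition~\ref{alprehom} (which converts the canonical-decomposition hypothesis into almost prehomogeneity) and into Theorem~\ref{mainthm} (which does the heavy lifting under the UFD plus codimension-one-orbits assumption). The hardest step \emph{conceptually} is perhaps recognizing that the theorem really is just a specialization of the general machinery, and not something requiring new input from the theory of hereditary algebras beyond Proposition~\ref{alprehom}.
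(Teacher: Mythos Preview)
Your proposal is correct and follows exactly the paper's own approach: the paper's proof consists of the single sentence ``This follows from Proposition~\ref{alprehom} and Theorem~\ref{mainthm},'' and you have merely unpacked that citation with the obvious verifications (irreducibility and UFD from the affine-space structure, codimension-one orbits and non-orbit-closure from almost prehomogeneity).
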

\begin{proof}
    This follows from Proposition \ref{alprehom} and Theorem \ref{mainthm}.
\end{proof}

\section{The General Case}
In the previous sections, we have dealt with the case where the irreducible component $\mathcal{C}$ has orbits of codimension at most one, under the UFD assumption. In the case of an open orbit, we get a polynomial ring for the ring of semi-invariants; and in case where there is no open orbit but codimension one orbits, we get a complete intersection for the ring of semi-invariants. When the maximal orbit has codimension at least two, it seems difficult to get structural results for the ring of semi-invariants. Nevertheless, under the UFD assumption, we can still draw some conclusion about the semi-invariant ring.

As before, we assume that $A$ denotes a finite dimensional (associative and unital) algebra over an algebraically closed field $\mathbb{K}$ of characteristics $0$. Let $(Q,R)$ be a quiver with relations associated to the algebra $A$, where $R\subseteq\mathbb{K}Q$ is an admissible ideal, and we let $\beta$ be a dimension vector. In order to get to the main result of this section, we will need a proposition, which is well known to experts.
\begin{prop}
\phantomsection\label{ratSI}
    Let $\mathcal{C}\subseteq \rep_{\beta}(Q,R)$ be an irreducible component such that the coordinate ring $\mathbb{K}[\mathcal{C}]$ is a UFD. If there exists a non-constant $\GL_{\beta}$-invariant rational function $r$ on $\mathcal{C}$, then there exists homogeneous $p,q\in\SI(Q,\mathcal{C})$ of the same weight such that $r = p/q$.
\end{prop}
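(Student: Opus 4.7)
The plan is to express $r$ as a reduced fraction using the UFD hypothesis and then show the numerator and denominator must both be semi-invariants of a common weight.

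First, I would write $r = a/b$ with $a, b \in \mathbb{K}[\mathcal{C}]$ coprime; this is possible because $\mathbb{K}[\mathcal{C}]$ is a UFD, so every element of its field of fractions admits a reduced representative. The $\GL_\beta$-invariance $g \cdot r = r$ then translates into the polynomial identity
\[
    a \cdot (g\cdot b) = b \cdot (g\cdot a)
\]
in $\mathbb{K}[\mathcal{C}]$ for every $g \in \GL_\beta$.

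Next, I would show that $b$ and $g \cdot b$ are associates. Since $b \mid a \cdot (g\cdot b)$ and $\gcd(a,b) = 1$, the UFD property forces $b \mid g \cdot b$. Repeating the argument with $g^{-1}$ in place of $g$ gives $b \mid g^{-1}\cdot b$, and applying the ring automorphism $g$ to both sides yields $g \cdot b \mid b$. Hence $b$ and $g \cdot b$ differ by a unit $u_g \in \mathbb{K}[\mathcal{C}]$, which by Lemma \ref{units} must be a scalar in $\mathbb{K}^*$.

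Finally, I would verify that $g \mapsto u_g$ is a regular multiplicative character of $\GL_\beta$: the cocycle identity $u_{gh} = u_g u_h$ follows from $(gh)\cdot b = g \cdot (u_h b) = u_g u_h b$, and regularity is inherited from the regularity of the $\GL_\beta$-action on $\mathbb{K}[\mathcal{C}]$. Therefore $b \in \SI(Q,\mathcal{C})_\sigma$ for some weight $\sigma$. From $a = r b$ together with the invariance of $r$, applying $g$ gives $g \cdot a = (g\cdot r)(g\cdot b) = r \cdot u_g b = u_g a$, so $a \in \SI(Q,\mathcal{C})_\sigma$ as well, and setting $p = a$, $q = b$ completes the proof. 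The only delicate point is the symmetric use of $g$ and $g^{-1}$ to upgrade the one-sided divisibility $b \mid g \cdot b$ to an associativity relation; this is precisely what enables the invocation of Lemma \ref{units} to force $u_g$ to be a scalar rather than merely an element of $\mathbb{K}[\mathcal{C}]$.
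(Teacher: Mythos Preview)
Your proof is correct and follows essentially the same approach as the paper: write $r$ as a reduced fraction, use coprimality in the UFD together with the action of both $g$ and $g^{-1}$ to show the denominator is an eigenvector for every $g$, invoke Lemma~\ref{units} to force the eigenvalue into $\mathbb{K}^*$, and conclude both numerator and denominator are semi-invariants of a common weight. The only cosmetic difference is that the paper first observes that $p$ and $q$ transform by the \emph{same} factor $f_g$ and then shows $f_g$ is a unit via the identity $f_g(g\cdot f_{g^{-1}})=1$, whereas you establish mutual divisibility $b\mid g\cdot b$ and $g\cdot b\mid b$ directly and then transfer the weight to $a$ via $a=rb$; the underlying mechanism is identical.
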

\begin{proof}
    We let $r = p/q$ where we may assume that $p$ and $q$ are coprime. For any element $g\in\GL_{\beta}$, we have 
    \begin{equation}
    \phantomsection\label{inveq}
        (g\cdot p)q=(g\cdot q)p.
    \end{equation}
    Therefore, we get $p\mid g\cdot p$ and $q\mid g\cdot q$. Hence, there exists $f_1,f_2\in\mathbb{K}[\mathcal{C}]$ such that
    \[
        g\cdot p=pf_1\text{ and }g\cdot q=qf_2.
    \]
    By Equation \ref{inveq}, we conclude that $f_1=f_2$. Define $f_g\coloneqq f_1=f_2$. By similar reasoning as above, there exists $f_{g^{-1}}\in\mathbb{K}[\mathcal{C}]$ such that $g^{-1}\cdot p=pf_{g^{-1}}$ and $g^{-1}\cdot q=qf_{g^{-1}}$. Therefore, we get
    \[
        p=e\cdot p=(gg^{-1})\cdot p=g\cdot(g^{-1}\cdot p)=g\cdot(pf_{g^{-1}})=pf_{g}(g\cdot f_{g^{-1}})
    \]
    Hence, we get $f_{g}(g\cdot f_{g^{-1}})=1$, i.e., $f_{g}$ is a unit. By Lemma \ref{units}, we know that $f_g\in\mathbb{K}^*$. Therefore, we conclude $p$ and $g\cdot p$ are associates for every $g\in\GL_{\beta}$. Now, by similar argument as in Proposition \ref{genSI}, we conclude that $p$ is a homogeneous semi-invariant. Similarly, we prove that $q$ is a homogeneous semi-invariant. Clearly, since $p/q$ is an invariant function, the semi-invariants $p,q$ need to have the same weight.
\end{proof}

\begin{defn}\,
    \begin{enumerate}
        \item The semi-invariant ring $\SI(Q,\mathcal{C})$ is said to be $\emph{multiplicity free}$ if every weight space has dimension at most one, i.e., $\dim_{\mathbb{K}}\SI(Q,\mathcal{C})_{\theta}\leq1$ for every weight $\theta\in\mathbb{Z}^{Q_0}$; otherwise it is said to be not multiplicity free.
        \item The algebra $A$ is said to be $\emph{multiplicity free}$ if for every dimension vector $\beta$ and every irreducible component $\mathcal{C}\subseteq \rep_{\beta}(Q,R)$, the semi-invariant ring $\SI(Q,\mathcal{C})$ is multiplicity free; otherwise it is said to be not multiplicity free (refer to \parencite[Definition 2.3.1]{ChinKinWey}).
    \end{enumerate}
\end{defn}
\begin{rmk}
    By (\ref{modsp}), the algebra $A$ being multiplicity free is equivalent to the fact that $\dim\mathcal{M}({\mathcal{C}})^{\theta-ss}=0$, for every irreducible component $\mathcal{C}$ and every weight $\theta$. 
\end{rmk}
\begin{thm}
    If there is an irreducible component $\mathcal{C}\subseteq \rep_{\beta}(Q,R)$ which is not an orbit closure and the coordinate ring $\mathbb{K}[\mathcal{C}]$ is a UFD, then the semi-invariant ring $\SI(Q,\mathcal{C})$ is not multiplicity free.
\end{thm}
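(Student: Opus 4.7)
The plan is to combine Rosenlicht's theorem on rational invariants with Proposition \ref{ratSI}: the former produces a non-constant $\GL_\beta$-invariant rational function on $\mathcal{C}$, and the latter converts any such function into a pair of linearly independent semi-invariants of the same weight.

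First I would show that, since $\mathcal{C}$ is not an orbit closure, the generic $\GL_\beta$-orbit in $\mathcal{C}$ has codimension at least one. Any $\GL_\beta$-orbit $\mathcal{O}$ is irreducible and locally closed in $\mathcal{C}$; if $\dim \mathcal{O}$ equalled $\dim \mathcal{C}$, then $\mathcal{O}$ would be a dense open subset of the irreducible variety $\mathcal{C}$ and we would have $\overline{\mathcal{O}} = \mathcal{C}$, contradicting the hypothesis that $\mathcal{C}$ is not an orbit closure. Hence the maximum orbit dimension is strictly smaller than $\dim \mathcal{C}$.

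Next I would invoke Rosenlicht's theorem, which states that for a regular action of a linear algebraic group on an irreducible variety, the transcendence degree of the field of invariant rational functions over $\mathbb{K}$ equals the codimension of the generic orbit. Applied to the $\GL_\beta$-action on $\mathcal{C}$, this yields $\mathrm{tr.deg}_{\mathbb{K}}\, \mathbb{K}(\mathcal{C})^{\GL_\beta} \geq 1$, so there exists a non-constant $\GL_\beta$-invariant rational function $r \in \mathbb{K}(\mathcal{C})^{\GL_\beta}$.

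Finally I would apply Proposition \ref{ratSI} (whose UFD hypothesis is exactly the one we have assumed) to the rational function $r$. This gives homogeneous semi-invariants $p, q \in \SI(Q, \mathcal{C})$ of a common weight $\chi$ with $r = p/q$. Because $r$ is non-constant, $p$ and $q$ are linearly independent over $\mathbb{K}$, and therefore $\dim_{\mathbb{K}} \SI(Q, \mathcal{C})_\chi \geq 2$, which is precisely the failure of the multiplicity-free property. The only genuinely non-trivial ingredient is Rosenlicht's theorem; once a non-constant rational invariant is produced, the rest is a one-step application of Proposition \ref{ratSI}.
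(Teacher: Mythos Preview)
Your proposal is correct and follows essentially the same route as the paper: both arguments use Rosenlicht's theorem to produce a non-constant $\GL_\beta$-invariant rational function from the hypothesis that $\mathcal{C}$ is not an orbit closure, and then apply Proposition~\ref{ratSI} to extract two linearly independent homogeneous semi-invariants of a common weight. The only cosmetic difference is that the paper phrases Rosenlicht's result via the existence of a geometric quotient $U\to W$ with $\dim W\ge 1$, whereas you invoke the equivalent transcendence-degree formulation directly.
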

\begin{proof}
    By Rosenlicht's result (refer to \parencite[Theorem]{Rosen}) we know that there exists a $\GL_{\beta}$-stable, open (dense) subset $U\subseteq\mathcal{C}$ such that there is a geometric quotient of $U$ by $\GL_{\beta}$, say $\Phi:U\rightarrow W$. By \parencite[Corollary, Section 3]{Fogarty} and \parencite[Remark 2, Section 2, Chapter 0]{mum}, we know that $W$ is an integral scheme of finite type over $\mathbb{K}$. Since $U$ is a union of infinitely many orbits, all of which are closed in $U$, we conclude that $\dim W\geq1$, and hence the transcendence degree of fraction field of $W$ is at least 1. By Rosenlicht's result (refer to \parencite[Theorem]{Rosen}), we know that the fraction field of $W$ can be identified with the $\GL_{\beta}$-invariant rational functions on $U$, i.e., $\mathbb{K}(U)^{\GL_{\beta}}$.
    
    By the argument in the previous paragraph we know that there exists a non-constant $\GL_{\beta}$-invariant rational function in $\mathbb{K}(U)^{\GL_{\beta}}$, say $p/q$. By Proposition \ref{ratSI}, we know that $p$ and $q$ are homogeneous semi-invariants of same weight, say $\theta$. Also, $p$ and $q$ are linearly independent, since the $\GL_{\beta}$-invariant rational function $p/q$ is a non-constant function. Hence, we conclude that $\dim_{\mathbb{K}}\SI(Q,\mathcal{C})_{\theta}\geq2$.
\end{proof}

\begin{rmk}
\phantomsection\label{lrmk}
Some immediate consequences of the above result are as follows:
    \begin{enumerate}
        \item It follows from the contrapositive statement of \parencite[Proposition 9]{ChinKinWey} that if the algebra $A$ is not multiplicity free, then there exists a dimension vector $\alpha$ such that $\rep_{\alpha}(Q,R)$ has infinitely many non-isomorphic bricks. In particular, it follows from \parencite[Theorem 4.2]{DIJ} that algebra $A$ is $\tau$-tilting infinite. 
        \item If we know that the algebra $A$ is $\tau$-tilting finite and there is an irreducible component $\mathcal{C}$ which is not an orbit closure, then the coordinate ring $\mathbb{K[\mathcal{C}]}$ is not a UFD. \label{notUFD}
    \end{enumerate}
\end{rmk}

\noindent
{\bf {Acknowledgement.}}
C.P., and D.W. were partially supported by the
National Sciences and Engineering Research Council of Canada and by the Canadian Defence
Academy Research Programme.

\newpage
\printbibliography
\end{document}